\documentclass[a4paper,11pt,reqno]{amsart}
\usepackage{amsxtra}
\usepackage{color}
\usepackage{amsopn}
\usepackage{amsmath,amsthm,amssymb,arydshln}
\usepackage{mathrsfs,mathtools}
\usepackage{stmaryrd}
\usepackage{hyperref}
\usepackage{enumerate}
\usepackage{xcolor}
\usepackage{float}
\usepackage{pifont}
\usepackage{adjustbox}

\newtheorem{theorem}{Theorem}[section]

\newtheorem{proposition}[theorem]{Proposition}

\newtheorem*{theorem*}{Theorem}

\theoremstyle{definition}
\newtheorem{definition}[theorem]{Definition}

\newtheorem{remark}[theorem]{Remark}

\makeatletter
\@namedef{subjclassname@2020}{%
  \textup{2020} Mathematics Subject Classification}
\makeatother

\newcommand{\R}{{\mathbb R}}

\newcommand{\Z}{\mathbb Z}

\newcommand{\beq}{\begin{equation}}
\newcommand{\eeq}{\end{equation}}

\newcommand{\f}{\varphi}

\renewcommand{\o}{\omega}

\newcommand{\talpha}{\tilde{\alpha}}



\newcommand{\psip}{\psi}


\newcommand{\SU}{{\mathrm{SU}}}

\newcommand{\GL}{{\mathrm {GL}}}

\newcommand{\G}{{\mathrm G}}

\newcommand{\SL}{{\mathrm {SL}}}


\newcommand{\W}{\wedge}

\DeclareMathOperator\tr{tr}

\DeclareMathOperator\End{End}
\DeclareMathOperator\Aut{Aut}

\DeclareMathOperator\ad{ad}

\DeclareMathOperator{\Der}{Der}



\newcommand{\frg}{\mathfrak{g}}

\newcommand{\frn}{\mathfrak{n}}

\newcommand{\frq}{\mathfrak{q}}
\newcommand{\frs}{\mathfrak{s}}

\renewcommand{\gg}{\mathfrak{g}}

\newcommand{\gr}{\mathfrak{r}}

\newcommand{\gsl}{\mathfrak{sl}}



\newcommand{\st}{\ |\ }

\newcommand{\sst}{\scriptscriptstyle}

\newcommand{\la}{\langle}
\newcommand{\ra}{\rangle}

\textheight=8in
\textwidth=6in
\oddsidemargin=0.2in
\evensidemargin=0.2in

\numberwithin{equation}{section}


\title[Exact G$_2$-structures on compact quotients of Lie groups]{Exact G$_{\mathbf2}$-structures on  compact quotients of Lie groups}

\author{Anna Fino} 
\address{Dipartimento di Matematica ``G.~Peano'' \\ Universit\`a degli Studi di Torino\\
Via Carlo Alberto 10\\
10123 Torino\\ Italy and Department of Mathematics and Statistics, Florida International University, Miami, FL 33199, USA}
\email{annamaria.fino@unito.it, afino@fiu.edu}

\author{Luc\'ia  Mart\'in-Merch\'an}
\address{
Facultad de Ciencias, Universidad de Malaga, Bulevar Louis Pasteur, 31, 29010, Malaga, Spain}
\email{lmmerchan@uma.es}

\author{Alberto Raffero}
\address{Dipartimento di Matematica ``G.~Peano'' \\ Universit\`a degli Studi di Torino\\
Via Carlo Alberto 10\\
10123 Torino\\ Italy}
\email{alberto.raffero@unito.it}

\subjclass[2020]{53C10, 53C30}
\keywords{exact $\G_2$-structure, Lie algebra, lattice}

\begin{document}
\begin{abstract} 
We show that the compact quotient $\Gamma\backslash\G$ of a seven-dimensional simply connected Lie group $\G$ by a co-compact discrete subgroup $\Gamma\subset\G$ 
does not admit any exact $\G_2$-structure which is induced by a left-invariant one on $\G$.
\end{abstract}
\maketitle

\vspace{-0.35cm}

\section{Introduction}

 A $\G_2$-structure on a $7$-manifold  $M$ 
is a reduction of the structure group of 
its frame bundle from the linear group  ${\GL}(7,\mathbb{R})$ to the  compact exceptional Lie group $\G_2$.  

The existence of a $\G_2$-structure on $M$ is characterized by the existence of a 3-form $\varphi\in\Omega^3(M)$ satisfying a certain nondegeneracy condition.  
This 3-form induces a Riemannian metric $g_\f$ and an orientation on the manifold, and thus a Hodge star operator $*_\f$. 

When $\varphi$ is  closed and co-closed, namely $d\f=0$ and $d*_\f\f=0$, the intrinsic torsion of the $\G_2$-structure vanishes identically, the Riemannian metric $g_\f$ is Ricci-flat, 
and $\mathrm{Hol}(g_\f)\subseteq\G_2$, see \cite{Bryant-1,FerGray}. In this case, the $\G_2$-structure is said to be {\em torsion-free}.
A $\G_2$-structure defined by a 3-form $\f$ satisfying the weaker condition $d\f=0$ is said to be {\em closed}. 
A closed $\G_2$-structure is called {\em exact} if $[\f]=0\in H^3_{\mathrm{dR}}(M)$, namely if $\f = d\alpha$ for some $\alpha\in\Omega^2(M)$.

Currently, many examples of compact manifolds admitting closed $\G_2$-structures are available, see \cite{CHNP, Joy, JoKa, Kov, KoLe} 
for examples admitting holonomy $\G_2$ metrics, \cite{FFKM} for an example obtained resolving the singularities of an orbifold, 
and \cite{Bal,Bry,CoFe,Fer1, Fer2, FR, KaLa} for examples on compact quotients of Lie groups.  
However, it is still not known whether exact $\G_2$-structures may occur on compact 7-manifolds. 
A negative answer to this problem was given in \cite{FFR,FS} in some special cases.  
In \cite{FFR}, M.~Fern\'andez and the first and third named author of this paper proved that there are no compact examples of the form $(\Gamma\backslash\G,\f)$, 
where $\G$ is a simply connected solvable Lie group with $(2,3)$-trivial Lie algebra $\frg$, namely $b_2(\frg)=0=b_3(\frg)$, $\Gamma\subset\G$ is a cocompact discrete subgroup (lattice), 
and $\f$ is an {\em invariant} exact $\G_2$-structure on $\Gamma\backslash\G$, namely it is induced by a left-invariant exact $\G_2$-structure on $\G$. 
In \cite{FS}, Freibert and Salamon showed that the same conclusion holds, more generally, when the Lie algebra of $\G$ admits a codimension-one nilpotent ideal. 

Motivated by these results, in this article we investigate the existence of invariant exact G$_2$-structures on compact quotients of Lie groups,  
without considering any extra assumption on the properties of the group.    
In particular, we  prove the following result. 
\begin{theorem}\label{MainThm}
A potential compact  $7$-manifold $M$ with an exact $\G_2$-structure $\f$ cannot be of the form $M=\Gamma\backslash\G$, 
where $\G$ is a seven-dimensional simply connected Lie group, $\Gamma\subset\G$ is a cocompact discrete subgroup, 
and the exact $\G_2$-structure $\f$ on $M$ is invariant.
\end{theorem}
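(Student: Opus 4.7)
The plan is to work at the Lie-algebra level and exploit the unimodularity of $\frg = \Lie(\G)$, which is forced by the existence of the lattice $\Gamma$. I would first reduce to the Lie algebra by averaging: using the bi-invariant Haar measure on the unimodular group $\G$, any $k$-form on $M$ admits an invariant average, this operator commutes with $d$, and it fixes invariant forms; hence from $\varphi = d\alpha$ with $\varphi$ invariant one may replace $\alpha$ by its average to obtain an invariant primitive $\alpha \in \Lambda^2\frg^*$. The statement thus becomes purely algebraic: no $\alpha \in \Lambda^2\frg^*$ on a $7$-dimensional unimodular Lie algebra $\frg$ has $d\alpha$ equal to a positive $\G_2$-form.

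Next I would extract a system of algebraic identities by applying the principle that, in dimension $7$ with $\frg$ unimodular, the Chevalley--Eilenberg differential vanishes on $\Lambda^6\frg^*$. Set $\psi := *_\varphi\varphi$ and let $\tau \in \Lambda^2_{14} \subset \Lambda^2\frg^*$ be the torsion 2-form of the closed $\G_2$-structure, characterized by $d\psi = \tau\wedge\varphi$ together with the pointwise identity $\tau\wedge\varphi = -*\tau$. Applying $d = 0$ to the 6-form $\alpha\wedge\psi$ yields
\[
0 \;=\; d(\alpha\wedge\psi) \;=\; \varphi\wedge\psi + \alpha\wedge\tau\wedge\varphi \;=\; \bigl(7 - \la\alpha,\tau\ra\bigr)\vol,
\]
so that $\la\alpha,\tau\ra = |\varphi|^2 = 7$. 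Similarly, $d(\alpha\wedge\alpha\wedge\alpha) = 0$ forces $\alpha\wedge\alpha\wedge\varphi = 0$, while $d(\alpha\wedge\alpha\wedge\tau) = 0$ forces $\alpha\wedge\alpha\wedge d\tau = 2\la\alpha,\tau\ra\vol = 14\,\vol$. Combining the last identity with the second Bianchi relation $d\tau\wedge\varphi = 0$---which places $d\tau$ in $\Lambda^3_1 \oplus \Lambda^3_{27}$ with $\Lambda^3_1$-component $\tfrac{|\tau|^2}{7}\varphi$---and with $\alpha\wedge\alpha\wedge\varphi = 0$ reduces it to a constraint purely on the $\Lambda^3_{27}$-component of $d\tau$.

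The final step is to close the contradiction from this algebraic system together with the curvature side. By itself the identity $\la\alpha,\tau\ra = 7$ is equivalent to the tautology $\la d\alpha,\varphi\ra = |\varphi|^2$, so the contradiction must come from combining the identities above with the closed-$\G_2$ relation $\Scal_{g_\varphi} = -\tfrac{1}{2}|\tau|^2$ and Milnor-type scalar-curvature estimates for unimodular Lie groups. Cauchy--Schwarz applied to $\la\alpha,\tau\ra = 7$ and to the pairing extracted from $\alpha\wedge\alpha\wedge (d\tau)_{27} = 14\,\vol$ should yield lower bounds on $|\alpha|\cdot|\tau|$ and on $|\alpha|^2\cdot|(d\tau)_{27}|$ that are to be contradicted by a universal upper bound on these quantities coming from the $\G_2$-positivity of $\varphi$ together with unimodularity; equivalently, one produces a $\G$-invariant $1$-form whose ``Lie-algebra divergence'' forces both vanishing (by unimodularity) and strict positivity (by $|\varphi|^2 = 7$).

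The main obstacle is precisely this closing step. The earlier results of \cite{FFR} and \cite{FS} bypassed the general case by imposing either cohomological vanishing $b_2(\frg) = b_3(\frg) = 0$ or the presence of a codimension-one nilpotent ideal, each supplying a preferred decomposition of $\frg$ that makes the positivity argument go through directly. Handling arbitrary $7$-dimensional unimodular Lie algebras uniformly---in particular those with nontrivial Levi factor---without recourse to a full classification is where the core difficulty lies.
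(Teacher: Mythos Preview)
Your reduction in the first paragraph goes one step too far, and this is fatal rather than cosmetic. You write that the problem ``becomes purely algebraic: no $\alpha\in\Lambda^2\frg^*$ on a $7$-dimensional unimodular Lie algebra $\frg$ has $d\alpha$ equal to a positive $\G_2$-form.'' That stronger statement is \emph{false}: there exist $7$-dimensional unimodular Lie algebras admitting exact $\G_2$-structures. Indeed, any unimodular Lie algebra with $b_3(\frg)=0$ that carries a closed $\G_2$-structure automatically carries an exact one, and such algebras appear already in \cite{FFR}; in the non-solvable case, the paper handles the indecomposable algebra $\frq_4$ not by excluding exact $\G_2$-structures on it but by showing that the corresponding group has no lattice. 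So the existence of $\Gamma$ is doing strictly more work than producing unimodularity, and any argument that forgets $\Gamma$ after that point cannot succeed. Concretely, in the solvable case the paper uses that a solvable group with a lattice must be \emph{strongly} unimodular (Definition~\ref{suDef}, after \cite{Gar}); your identities use only $d|_{\Lambda^6\frg^*}=0$, i.e.\ ordinary unimodularity, and therefore cannot separate the strongly unimodular algebras (for which the conclusion holds) from the merely unimodular ones (for which it does not).

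Your identities $\la\alpha,\tau\ra=7$, $\alpha^2\wedge\varphi=0$, and $\alpha^2\wedge d\tau=14\,\vol$ are correct and pleasant, but as you yourself note the first is tautological, and you do not actually close the contradiction from the remaining two; the final paragraph is an honest admission that the ``closing step'' is missing. The paper proceeds entirely differently: it splits into the non-solvable case (handled via the short classification of \cite{FR}, with direct computation for the three decomposable algebras and a lattice obstruction for $\frq_4$) and the solvable case, where the strong-unimodularity constraint plus the reduction to an $\SU(3)$-structure on a codimension-one ideal $\frs$ (equations~\eqref{SU3ideal}) allows a systematic run through the known classification of six-dimensional unimodular solvable non-nilpotent Lie algebras. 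If you want to salvage a classification-free approach, the minimum repair is to import the strong-unimodularity hypothesis into your algebraic identities and to find a separate argument for the non-solvable piece; but as it stands the proposal attempts to prove a statement that has counterexamples.
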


The proof of this theorem will be divided into two parts: in Section \ref{NonSolSect} we focus on the case when $\G$ is non-solvable, while we investigate the solvable case in Section \ref{SolSect}. 
We shall deal only with Lie groups that are unimodular, as this is a necessary condition for the existence of lattices \cite{Mil}. 

There is a one-to-one correspondence between left-invariant exact $\G_2$-structures on $\G$ and $\G_2$-structures on the Lie algebra $\frg = \mathrm{Lie}(\G)$ that are exact with respect to the 
Chevalley-Eilenberg differential. This allows us to investigate the existence of exact $\G_2$-structures at the Lie algebra level.  
We recall that a 3-form $\f$ on $\frg$ defines a $\G_2$-structure if and only if the symmetric bilinear map
\begin{equation}\label{G2map}
b_\f:\frg\times\frg\rightarrow\Lambda^7\frg^*, \quad b_\f(v,w) = \frac16\,\iota_v\f \W \iota_w\f\W\f, 
\end{equation}
satisfies the condition $\det(b_\varphi)^{1/9}\neq 0 \in\Lambda^7\frg^*$ and the symmetric bilinear form 
\[
g_\varphi\coloneqq \det(b_\varphi)^{-1/9} \, b_\varphi : \frg \times \frg \rightarrow \R
\]
is positive definite, see e.g.~\cite{Hitchin}. In particular, for any choice of orientation on $\frg$, the map  
$b_\f:\frg\times\frg\rightarrow \Lambda^7\frg^*\cong\R$ has to be positive or negative definite.

By \cite{FR}, there are $4$ non-solvable unimodular Lie algebras admitting closed $\G_2$-structures, up to isomorphism. 
Three of these Lie algebras are decomposable, and a direct computation with the aid of the software Maple 21 shows that $b_\f$ is never definite for every exact 3-form $\f = d\alpha$ on each one of them 
(see Proposition \ref{NonSolDec}). 
The remaining Lie algebra is indecomposable, and we show that the corresponding simply connected Lie group does not admit any lattice (see Proposition \ref{NonSolDec}). 
These results prove Theorem \ref{MainThm} in the case when $\G$ is non-solvable. 

We then focus on the solvable case. 
Here, there is a further constraint that has to be taken into account. Indeed, a solvable Lie group admits lattices only if it is strongly unimodular \cite{Gar} (see Section \ref{SolSect} for the definition). 
The proof of Theorem \ref{MainThm} when $\G$ is solvable follows then from Theorem \ref{ThmSolvable}, where we show that a seven-dimensional strongly unimodular solvable Lie algebra $\frg$ 
does not admit any exact $\G_2$-structure. 
To achieve this result, we first observe that every such Lie algebra is a semidirect product $\frg \cong \frs \rtimes_D\R$, for some codimension-one unimodular ideal $\frs$ of $\frg$, 
which must be solvable and non-nilpotent by \cite{FS}, and some derivation $D\in\Der(\frs)$. 
The strongly unimodular condition on $\frg$ is then encoded into the derivation $D$, while the existence of an exact G$_2$-structure on $\frg$ implies the existence of a certain type of SU(3)-structure on $\frs$. 
Using these constraints together with the classification of six-dimensional unimodular solvable non-nilpotent Lie algebras, 
we show that none of these Lie algebras can occur as an ideal of a strongly unimodular solvable Lie algebra admitting exact $\G_2$-structures. 
As in the proof of Proposition \ref{NonSolDec}, the computations are done with the aid of the software Maple 21.


\subsection*{Notation} 

Given an $n$-dimensional Lie algebra $\frg$, its structure equations with respect to a basis  $\mathcal{B}^*=(e^1,\ldots,e^n)$ of $\frg^*$ are specified by the $n$-tuple 
$(de^1,\ldots,de^n)$, where $d$ denotes the Chevalley-Eilenberg differential of $\frg$. 
The basis of $\frg$ with dual basis $\mathcal{B}^*$ is denoted by $(e_1,\ldots,e_n)$.

The shortening $e^{ijk\cdots}$ for the wedge product of covectors $e^i \W e^j \W e^k \W\cdots$ is used throughout the paper.

\section{The non-solvable case}\label{NonSolSect}

In this section, we deal with the case when the simply connected unimodular Lie group $\G$ is non-solvable.   
We claim that, in such a case, there are no compact 7-manifolds of the form $\Gamma \backslash \G$ admitting invariant exact $\G_2$-structures. 

By \cite{FR}, we know that $\G$ admits left-invariant closed $\G_2$-structures if and only if its Lie algebra $\gg$ is isomorphic to one of the following
\[
\begin{split}
\frq_1	&= \left(-e^{23},-2\,e^{12},2\,e^{13},0,-e^{45},\frac{1}{2}\,e^{46}-e^{47},\frac{1}{2}\,e^{47}\right);  \\
\frq_2	&= \left(-e^{23},-2\,e^{12},2\,e^{13},0,-e^{45},-\mu\,e^{46},(1+\mu)\,e^{47}\right), \quad -1<\mu\leq-\frac12; \\ 
\frq_3	&= \left(-e^{23},-2\,e^{12},2\,e^{13},0, -\mu\,e^{45},\frac{\mu}{2}\,e^{46}-e^{47},e^{46}+\frac{\mu}{2}\,e^{47}\right),\quad \mu>0; \\ 
\frq_4	&=  \left(-e^{23},-2\,e^{12},2\,e^{13},-e^{14}-e^{25}-e^{47},e^{15}-e^{34}-e^{57},2\,e^{67},0\right). 	
\end{split}
\]
The first three Lie algebras appearing in the previous list decompose as $\frq \cong \gsl(2,\R) \oplus \gr$, where $\gr$ is a four-dimensional unimodular centerless solvable Lie algebra, 
while the Lie algebra $\frq_4$ is indecomposable. 

The proof of our claim follows from the next two propositions. 

\begin{proposition}\label{NonSolDec} 
A seven-dimensional unimodular non-solvable Lie algebra $\frg$ does not admit any exact $\G_2$-structure if it is decomposable. 
\end{proposition}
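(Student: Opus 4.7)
The plan is to reduce the claim to an explicit verification on three specific Lie algebras. By \cite{FR}, a seven-dimensional unimodular non-solvable Lie algebra carries a closed $\G_2$-structure only if it is isomorphic to one of $\frq_1, \frq_2, \frq_3, \frq_4$. Since an exact $\G_2$-structure is in particular closed, a decomposable such Lie algebra must be isomorphic to one of $\frq_1, \frq_2, \frq_3$. It therefore suffices to show that none of these three Lie algebras admits an exact 3-form defining a $\G_2$-structure.

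For each $\frq_i = \gsl(2,\R) \oplus \gr$, we parametrise a general 2-form $\alpha = \sum_{j < k} a_{jk}\,e^{jk}$ with $a_{jk} \in \R$, set $\f = d\alpha$, and view the symmetric bilinear map $b_\f$ of \eqref{G2map} as a $7 \times 7$ symmetric matrix $B(\alpha)$ with polynomial entries in the $a_{jk}$, after identifying $\Lambda^7 \frq_i^* \cong \R$ via $e^{1234567}$. Exploiting the splitting $\frq_i^* = \gsl(2,\R)^* \oplus \gr^*$, we endow $\Lambda^\bullet \frq_i^*$ with the induced bigrading $(p,q)$. The key structural observation is that the Chevalley-Eilenberg differential annihilates $\Lambda^2 \gsl(2,\R)^*$, so every exact 3-form $\f = d\alpha$ has vanishing $(3,0)$-component. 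Writing $\f = A + B + C$ with $A \in \Lambda^{2,1}$, $B \in \Lambda^{1,2}$, $C \in \Lambda^{0,3}$, and noting that $\iota_v B \wedge \iota_w B \wedge \f = 0$ for $v, w \in \gsl(2,\R)$ by bidegree count, the restriction of $b_\f$ to $\gsl(2,\R)$ simplifies to
\begin{equation*}
b_\f(v,w) = \tfrac16 \bigl[\iota_v A \wedge \iota_w A \wedge B + \iota_v A \wedge \iota_w B \wedge A + \iota_v B \wedge \iota_w A \wedge A\bigr].
\end{equation*}

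The core of the argument is to show that this $3 \times 3$ restriction is never positive or negative definite; this suffices, because definiteness of $B(\alpha)$ on all of $\frq_i$ would force definiteness on every subspace. Writing $\alpha^{(1,1)} = \sum_{j=1}^{3} e^j \wedge \gamma_j$ with $\gamma_j \in \gr^*$, each diagonal entry $b_\f(e_j,e_j)$ becomes an explicit polynomial in the $\gamma_k$ and their $\gr$-differentials, and the off-diagonal entries take a similar form. One then seeks either a fixed nonzero $v \in \gsl(2,\R)$ for which $b_\f(v,v) \le 0$ identically, or a sign obstruction among the principal minors valid for all values of the $a_{jk}$.

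The principal difficulty is carrying out this analysis uniformly across the three families, particularly in view of the one-parameter dependence on $\mu$ in $\frq_2$ and $\frq_3$. If a single uniform obstruction is not immediately visible, the fallback is a case-by-case symbolic analysis (possibly aided by computer algebra) of the $3 \times 3$ block on each $\gr$, checking via Sylvester's criterion that the block cannot be definite. The bigrading considerations above provide the crucial simplification that should make this otherwise lengthy computation tractable.
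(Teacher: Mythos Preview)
Your reduction to $\frq_1,\frq_2,\frq_3$ via \cite{FR} is correct, and the bigrading argument showing that any exact $\f$ has vanishing $(3,0)$-component is both valid and elegant. However, the proposal is incomplete: you never actually establish that the $3\times 3$ block of $b_\f$ on $\gsl(2,\R)$ fails to be definite, only outline a strategy (``seek a vector with $b_\f(v,v)\le 0$'', ``fallback to symbolic Sylvester analysis''). Worse, there is no evident reason why this particular block should fail to be definite; its diagonal entries are generically nonzero, and nothing in your bigrading forces an obstruction on $\gsl(2,\R)$. So the gap is not merely a matter of filling in a computation---your chosen subspace may simply not carry the obstruction.

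The paper's argument looks at the \emph{other} summand and is much shorter: one checks directly that $b_\f(e_i,e_i)=0$ for $i=5,6,7$, which immediately rules out definiteness of $b_\f$. Your own bigrading explains why this works. In each case $\gr$ is almost abelian of the form $\R e_4\ltimes\langle e_5,e_6,e_7\rangle$, so every exact form in $\Lambda^\bullet\gr^*$ is divisible by $e^4$; hence both the $(1,2)$- and $(0,3)$-components $B$ and $C$ of $\f=d\alpha$ carry $e^4$ as a factor. For $v\in\langle e_5,e_6,e_7\rangle$ the contractions $\iota_vB$ and $\iota_vC$ retain this factor, and a bidegree count (using the vanishing of the $(3,0)$-component that you observed) shows every surviving term in $\iota_v\f\wedge\iota_v\f\wedge\f$ involves at least two of $B,C,\iota_vB,\iota_vC$, hence $e^4\wedge e^4=0$. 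The obstruction you were seeking therefore lives on the $\gr$ side and falls out in one line; redirect your analysis there.
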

\begin{proof}
By \cite{FR}, $\frg$ is isomorphic to one of $\frq_1,\frq_2,\frq_3$. 
For each one of these Lie algebras, we consider the generic 2-form $\alpha = \sum_{1\leq i < j \leq 7} a_{ij} e^{ij}$, where $a_{ij}\in\R$, and we use the structure equations to compute 
the expression of the generic exact 3-form $\f = d\alpha$. In detail, we obtain 
\begin{enumerate}[$\bullet$]
\item Lie algebra $\frq_1$
\[
\begin{split}
\f	&= -2 a_{24} \,e^{124} -2 a_{25} \,e^{125} -2 a_{26} \,e^{126} -2 a_{27} \,e^{127} +2 a_{34}\,e^{134} +2 a_{35}\,e^{135} +2 a_{36}\,e^{136}\\  
	&\quad + 2 a_{37}\,e^{137}  + a_{15}\, e^{145} -\frac12  a_{16}\, e^{146} + \left( a_{16} - \frac12 a_{17} \right) e^{147} -a_{14}\,e^{234} -a_{15}\,e^{235} \\
	&\quad -a_{16}\,e^{236} - a_{17}\,e^{237} + a_{25}\,e^{245} -\frac12 a_{26} \, e^{246}  + \left( a_{26} - \frac12 a_{27} \right) e^{247} + a_{35}\,e^{345} \\
	&\quad -\frac12 a_{36} \, e^{346} + \left( a_{36} - \frac12 a_{37} \right) e^{347} -\frac12 a_{56}\, e^{456} + a_{67}\,e^{467} -\left(a_{56}+\frac12 a_{57} \right) e^{457};
\end{split}
\]
\item Lie algebra $\frq_2$
\[
\begin{split}
\f	&=	-2a_{24}\, e^{124} - 2a_{25}\, e^{125}- 2a_{26}\, e^{126}- 2a_{27}\, e^{127} + 2a_{34}\, e^{134} + 2a_{35}\, e^{135}+ 2a_{36}\, e^{136}\\ 
	&\quad	+ 2a_{37}\, e^{137} + a_{15}\, e^{145}+ a_{16}\mu\, e^{146}- a_{17}(1 + \mu)\, e^{147} - a_{14}\, e^{234}- a_{15}\, e^{235}\\
	&\quad	- a_{16}\, e^{236}- a_{17}\, e^{237} + a_{25}\, e^{245}+ a_{26}\mu\, e^{246}- a_{27}(1 + \mu)\, e^{247}+ a_{35}\, e^{345}\\
	&\quad	+ a_{36}\mu\, e^{346}- a_{37}(1 + \mu)\, e^{347}- a_{56}(1 + \mu)\, e^{456}+ a_{57}\mu\, e^{457}+ a_{67}\, e^{467};
\end{split}
\]
\item Lie algebra $\frq_3$
\[
\begin{split}
\f	&=		-2a_{24}\, e^{124} - 2a_{25}\, e^{125} - 2a_{26}\, e^{126} - 2a_{27}\, e^{127} + 2a_{34}\, e^{134}  + 2a_{35}\, e^{135} + 2a_{36}\, e^{136}\\
	&\quad	+ 2a_{37}\, e^{137} + a_{15}\mu\, e^{145} - \left(\frac12a_{16}\mu + a_{17}\right) e^{146}  - \left(\frac12a_{17}\mu - a_{16}\right) e^{147} - a_{14}\, e^{234}\\
	&\quad	- a_{15}\, e^{235} - a_{16}\, e^{236} - a_{17}\, e^{237} + a_{25}\mu\, e^{245} - \left(\frac12a_{26}\mu + a_{27}\right) e^{246} - \left(\frac12a_{27}\mu - a_{26}\right) e^{247}\\ 
	&\quad	+ a_{35}\mu\, e^{345} - \left(\frac12a_{36}\mu + a_{37}\right) e^{346} - \left(\frac12a_{37}\mu - a_{36}\right) e^{347} - \left(\frac12a_{56}\mu - a_{57}\right) e^{456}\\
	&\quad	 - \left(\frac12a_{57}\mu + a_{56}\right) e^{457} + a_{67}\mu\, e^{467}.
\end{split}
\]     
\end{enumerate} 
Now, a direct computation with the aid of the software Maple 21 shows that in each case the bilinear map $b_\f$ defined in \eqref{G2map} satisfies $b_\f(e_i,e_i) = 0$, for $i=5,6,7$. 
Consequently, $\f=d\alpha$ does not define a $\G_2$-structure on $\frg_k$, for $k=1,2,3$. 
\end{proof}

\begin{proposition}\label{NonSolIndec}
Let $\mathrm{Q}_4$ be the simply connected Lie group with Lie algebra $\frq_4$. Then, $\mathrm{Q}_4$ does not admit any lattice. 
\end{proposition}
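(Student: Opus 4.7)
The plan is to assume for contradiction that $\Gamma$ is a lattice in $G := \mathrm{Q}_4$, and derive a contradiction by restricting to the solvable radical $R \subset G$ corresponding to the radical $\gr$ of $\frq_4$. Reading the structure equations, $\gr = \Span(e_4, e_5, e_6, e_7)$ is four-dimensional solvable with abelian nilradical $\gn = \Span(e_4, e_5, e_6)$, and $\ad(e_7)|_{\gn}$ is represented by $\diag(-1, -1, 2)$ in the basis $(e_4, e_5, e_6)$. Consequently, $R$ is isomorphic to the simply connected solvable group $\R \ltimes_\phi \R^3$ with $\phi(t) = \diag(e^{-t}, e^{-t}, e^{2t})$.

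The first step is to show $\Gamma \cap R$ is a lattice in $R$. I would invoke two classical facts of Mostow: (i) for any lattice $\Gamma$ in a connected Lie group $G$ with radical $R$, the product $\Gamma R$ is closed in $G$; and (ii) discrete subgroups of finite covolume in a connected solvable Lie group are automatically cocompact. The closedness in (i) makes the natural fibration $G/\Gamma \to (G/R)/(\Gamma R/R)$ well-defined with fibers $R/(\Gamma \cap R)$, so the finite volume of $G/\Gamma$ forces both the base and the fibers to have finite volume; in particular $\Gamma \cap R$ has finite covolume in $R$, and then (ii) upgrades this to cocompactness.

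The second step is to prove that $R$ itself admits no lattice. By the standard criterion for simply connected groups of the form $\R \ltimes_\phi \R^n$, such a lattice exists if and only if there is some $t_0 > 0$ for which $\phi(t_0)$ is conjugate in $\GL(n, \R)$ to an element of $\GL(n, \Z)$. Here $\phi(t_0) = \diag(e^{-t_0}, e^{-t_0}, e^{2t_0})$ is diagonalizable with two distinct eigenvalues, so its minimal polynomial $(x - e^{-t_0})(x - e^{2t_0})$ must lie in $\Z[x]$. Writing $e^{t_0} = k \in \Z_{>0}$ (forced by the constant term $e^{t_0}$) and substituting into the linear coefficient yields $e^{-t_0} + e^{2t_0} = k^{-1} + k^2 \in \Z$, which compels $k = 1$ and hence $t_0 = 0$, contradicting $t_0 > 0$. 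This shows $R$ has no lattice, contradicting the conclusion of the first step.

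The main conceptual step is the first one: reducing the existence of a lattice in $G$ to the existence of a lattice in the radical $R$, via Mostow's closedness theorem together with the cocompactness fact for solvable groups. Once this reduction is in hand, the impossibility of a lattice in $R$ follows from an elementary computation on the minimal polynomial of a diagonal matrix.
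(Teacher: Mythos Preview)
Your proof is correct and follows essentially the same route as the paper's. Both arguments reduce to the radical $R\cong\R\ltimes_{\phi}\R^3$ with $\phi(t)=\diag(e^{-t},e^{-t},e^{2t})$: the paper invokes \cite[Cor.~8.28]{Rag} for the reduction and then cites \cite{Boc} (including \cite[Prop.~B.5]{Boc}) to rule out a lattice in $R$, while you sketch the reduction via Mostow's closedness theorem and the fibration $G/\Gamma\to (G/R)/(\Gamma R/R)$, and then carry out the final integrality check by hand using the minimal polynomial. Your explicit endgame---forcing $e^{t_0}\in\Z$ from the constant term and then $e^{-t_0}\in\Z$ from the linear term---is a clean self-contained substitute for the citation of \cite[Prop.~B.5]{Boc}.
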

\begin{proof}
The Lie algebra $\frq_4$ is isomorphic to a semi-direct product of the form $\gsl(2,\R) \ltimes \gr$, 
where the semisimple part is spanned by $e_1,e_2,e_3$, and the four-dimensional radical $\gr = \R\ltimes_D \R^3$ is almost abelian, with 
$\R = \langle e_7\rangle$, $\R^3=\langle e_4,e_5,e_6\rangle$ and 
\[
D \coloneqq \ad_{e_7}|_{\R^3} = \begin{pmatrix} -1 & 0 & 0 \\ 0 & -1 & 0 \\ 0 & 0 & 2 \end{pmatrix}. 
\] 
In particular, the radical of $\mathrm{Q}_4$ is the almost abelian Lie group $\R\ltimes_\mu \R^3$, 
where the one-parameter group $\mu : \R \rightarrow  \Aut(\R^3) $ is defined by the condition $d\mu(t) = \exp(tD)$. 

Now, by \cite[Prop.~1.3]{Wu},				
if $\mathrm{Q}_4$ has a lattice, then also its radical does. By \cite{Boc}, in such a case there must be some $t' \in\R\smallsetminus\{0\}$ such that 
\[
\exp(t' D) = \begin{pmatrix} e^{-t'} & 0 & 0 \\ 0 & e^{-t'} & 0 \\ 0 & 0 & e^{2t'} \end{pmatrix}
\]
is conjugate to a matrix in $\SL(3,\Z)$. This is not possible by  \cite[Lemma~B.4]{Boc}. 
\end{proof}

\section{The solvable case}\label{SolSect}
We now assume that $\G$ is solvable. 
As shown in \cite{Gar}, a simply connected solvable Lie group admitting lattices must be {\em strongly unimodular} according to the following. 
\begin{definition}\label{suDef}
Let $\mathrm{G}$ be a simply connected solvable Lie group with Lie algebra $\frg$ and nilradical $\frn$.  
For each positive integer $i\geq 1$, let $\frn^{\sst i} \coloneqq [\frn,\frn^{\sst i-1}]$ denote the $i^{th}$ term in the descending central series of $\frn$, where $\frn^{\sst0} = \frn$. 
The Lie algebra $\frg$ is {\em strongly unimodular} if for all $X\in\frg$ the restriction of $\ad_{\sst X}$ to each space $\frn^{\sst i}/\frn^{\sst i+1}$ is traceless. 
In this case, the Lie group $\mathrm{G}$ is said to be {\em strongly unimodular}.
\end{definition}

As the name suggests, strongly unimodular Lie groups are unimodular, but the converse does not hold in general, see for instance \cite{FFR}. 

\smallskip

The proof of Theorem \ref{MainThm} in the case when $\G$ is solvable follows from the next result. 

\begin{theorem}\label{ThmSolvable}
A seven-dimensional  strongly unimodular solvable Lie algebra $\frg$  does not  admit any exact  $\G_2$-structure.
\end{theorem}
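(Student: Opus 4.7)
The plan is to combine a general algebraic obstruction coming from unimodularity with the reductions already available in the literature and, for the remaining cases, a structural analysis exploiting strong unimodularity.

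The starting point is the following key observation, valid on any unimodular Lie algebra $\frg$ of dimension $7$: for an exact $3$-form $\f = d\alpha$ and every $Z \in \frz(\frg)$, left-invariance of $\alpha$ gives $\mathcal{L}_Z\alpha = 0$, so Cartan's formula yields $\iota_Z\f = -d\eta$ with $\eta := \iota_Z\alpha$. Consequently,
\[
\iota_Z\f\wedge\iota_Z\f\wedge\f \;=\; d\eta\wedge d\eta\wedge d\alpha \;=\; d(\eta\wedge d\eta\wedge d\alpha).
\]
Unimodularity forces $H^7(\frg)\cong\R$, so $d\colon\Lambda^6\frg^*\to\Lambda^7\frg^*$ is identically zero, and therefore $b_\f(Z,Z)=0$. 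Since $b_\f$ must be definite for $\f$ to define a $\G_2$-structure, a necessary condition for existence of an exact $\G_2$-structure is $\frz(\frg)=0$; in particular, every nilpotent $\frg$ (which is automatically strongly unimodular and has non-trivial center) is excluded.

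Combining this Key Lemma with the theorem of Freibert and Salamon \cite{FS}, which rules out exact $\G_2$-structures whenever $\frg$ admits a codimension-one nilpotent ideal, we may restrict attention to $\frg$ solvable with $\frz(\frg)=0$ and nilradical $\frn$ of codimension at least two. Since one can check that a seven-dimensional strongly unimodular solvable Lie algebra satisfies $\dim\frn\geq 4$, this leaves $\dim\frn\in\{4,5\}$. Writing $\frg=\frm\oplus\frn$ as vector spaces with $\dim\frm\in\{2,3\}$ and $\frm$ abelian modulo $\frn$, strong unimodularity is equivalent to the tracelessness of $\ad_X|_{\frn^i/\frn^{i+1}}$ for every $X\in\frm$ and $i\geq 0$, while $\frz(\frg)=0$ amounts to faithfulness of the action of $\frm$ on $\frz(\frn)$. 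Together with the classification of nilpotent Lie algebras of dimension at most five, these constraints reduce the problem to a finite list of families $(\frn,\rho)$, where $\rho\colon\frm\to\Der(\frn)$ encodes the admissible traceless actions with trivial centralizer in $\frz(\frn)$.

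For each Lie algebra $\frg$ obtained in this way, the verification follows the strategy of Proposition~\ref{NonSolDec}: write the generic $\alpha$ in an adapted basis, compute $b_\f$ as a symmetric bilinear form with entries quadratic in the coefficients of $\alpha$, and exhibit a non-zero vector on which $b_\f$ vanishes identically. A natural candidate is a non-zero element of the characteristic ideal $\frn^{k-1}$, the last non-zero term in the descending central series of $\frn$, which is always abelian and $\frg$-invariant. The main obstacle is the case $\dim\frn=4$ with trivial center: the rank-three abelian complement $\frm$ admits the widest range of traceless actions on $\frn$, so the case analysis is heaviest there. A desirable uniform resolution would be a refinement of the Key Lemma replacing $\frz(\frg)$ by a suitable $\frg$-invariant subspace of $\frn^{k-1}$, with the trace conditions absorbing the additional $\ad_\frm$-terms appearing in $\iota_X\f$; producing such a refinement, or else working through each family by symbolic computation, is the heart of the argument.
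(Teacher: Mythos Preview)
Your Key Lemma (that $b_\f(Z,Z)=0$ for every central $Z$ when $\f$ is exact and $\frg$ unimodular) is correct and elegant, and the reduction via \cite{FS} to nilradical of codimension at least two is sound. However, the proposal is not a proof: you explicitly defer ``the heart of the argument'' to either an unproved refinement of the Key Lemma or to an unperformed symbolic case analysis. Neither is supplied. The families you would have to treat---seven-dimensional strongly unimodular solvable $\frg$ with $\frz(\frg)=0$ and $\dim\frn\in\{4,5\}$---form continuous multi-parameter families (especially for $\frn$ abelian of dimension four with a three-dimensional traceless abelian complement), and your suggested candidate for a null vector, an element of the last term $\frn^{k-1}$ of the descending central series, carries no leverage when $\frn$ is abelian, which is precisely the hardest case. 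So as it stands there is a genuine gap: the reduction is fine, but the actual exclusion of the remaining algebras is missing.

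It is also worth noting that the paper's route is organized quite differently. Rather than working with the nilradical of $\frg$, the paper chooses \emph{any} codimension-one unimodular ideal $\frs$ (not nilpotent, by \cite{FS}) and rewrites an exact $\G_2$-structure as an $\SU(3)$-structure $(\omega,\psip)$ on $\frs$ with $\psip=\hat d\alpha$ exact and $\omega=\hat d\beta-D^*\alpha$. This moves the entire problem down to dimension six, where the classification of unimodular solvable non-nilpotent Lie algebras is available and finite (tables in \cite{Boc}). Most six-dimensional candidates are eliminated because no exact $3$-form is stable with $\lambda<0$; the handful that survive are dispatched by exhibiting a vector $x$ with $\omega(x,Jx)=0$, often after conjugating by a carefully chosen nilpotent automorphism $\exp(S)$ of $\frs$. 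This $\SU(3)$-reduction is the substantive idea that your outline lacks; it is what makes the case analysis tractable, and it is not recoverable from the $b_\f$-on-$\frg$ viewpoint you propose.
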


Before describing the strategy of the proof, we discuss some preliminary results. 
Let $\frg$ be a unimodular solvable Lie algebra endowed with a $\G_2$-structure $\f$. 
Then, it has a codimension-one unimodular ideal $\frs$, and we can consider the $g_\f$-orthogonal decomposition $\frg = \frs \oplus\R$, 
where $\R$ denotes the orthogonal complement of $\frs$. As a Lie algebra, $\frg$ is then a semidirect product of the form $\frg = \frs \rtimes_D\R$, for some derivation $D$ of $\frs$. 
The $\G_2$-structure $\f$ on $\frg$ can be written as follows
\[
\f = \omega\W \eta + \psip,
\]
where $\eta\coloneqq z^\flat$ is the metric dual of a unit vector $z\in\R$, and the pair $(\omega,\psip)$ defines an SU(3)-structure on $\frs$. In detail (see also \cite{Hitchin}): 
\begin{enumerate}[a)]
\item $\omega\in\Lambda^2\frs^*$ is a non-degenerate 2-form, i.e., $\omega^3 = \omega\W\omega\W\omega\neq0$;
\item $\psip\in\Lambda^3\frs^*$ is a {\em negative stable} 3-form, namely a stable 3-form whose associated quartic polynomial satisfies $\lambda(\psip)<0$. 
Here, $\lambda(\psip) \coloneqq \frac16\mathrm{tr}(K_{\psip}^2)$, 
where $K_{\psip}\in\End(\frs)$ is defined as follows. 
Let $A:\Lambda^5\frs^*\rightarrow \frs\otimes \Lambda^6\frs^*$ be the isomorphism induced by the wedge product $\W : \Lambda^5\frs^*\otimes \frs^* \rightarrow \Lambda^6\frs^*$, then 
$K_{\psip}(v)\otimes \omega^3 = A(\iota_v\psip\W\psip)$, for all $v\in\frs$. 
In particular, $K_{\psip}^2 = \lambda(\psip)\mathrm{Id}_\frs$, so that $(\omega,\psip)$ determines an almost complex structure 
\begin{equation}\label{Jpsi}
J:\frs\rightarrow\frs,\quad J = \frac{1}{\sqrt{-\lambda(\psip)}} \, K_{\psip};
\end{equation}
\item $\psip$ is primitive with respect to $\omega$, i.e., $\psip\W\omega=0$. This is equivalent to $\omega$ being of type $(1,1)$ with respect to $J$, namely $\omega(J\cdot,J\cdot)=\omega$;
\item the symmetric bilinear form $g \coloneqq \omega(\cdot,J\cdot)$ is positive definite.
\end{enumerate}
\begin{remark}
More generally, given a stable 3-form $\psip$ on $\frs$, one can define the endomorphism $K_\psip$ by choosing {\em any} volume form $\Omega$ on $\frs$ in place of $\omega^3$,  
and the sign of $\lambda(\psip)$ does not depend on this choice.  
Moreover, if $\lambda(\psip)<0$, the almost complex structure $J$ depends only on $\psip$ and on the orientation of $\frs$. 
Changing the orientation, one obtains the almost complex structure $-J$. 
Finally, we recall that $\psip$ is a negative stable $3$-form if and only if the contraction $\iota_v\psi$ has rank four for every non-zero vector $v\in\frs$. 
\end{remark}

Using the definition of the Chevalley-Eilenberg differential $d$ of $\frg$, we also see that $d\eta=0$. 
Indeed, for every $x,y\in\frg$ we have
\[
d\eta(x,y) = -\eta([x,y]) = -g_\f(z,[x,y]) = 0,
\]
since $[x,y]\in\frs = \langle z\rangle^{\perp_{g_\f}}$.

\smallskip

Assume now that $\f$ is an exact $\G_2$-structure on $\frg = \frs \rtimes_D\R$, namely $\f=d\talpha$ for some $\talpha\in\Lambda^2\frg^*$. 
By \cite{FS}, we know that if $\frg$ is strongly unimodular, then the solvable ideal $\frs$ is not nilpotent. 

We can write $\talpha = \alpha + \beta \wedge \eta$, where $\alpha\in\Lambda^2 \frs^*$ and $\beta \in \frs^*$. 
Then, 
\[
\f =  \hat{d}\alpha +D^*\alpha\wedge \eta+ \hat{d}\beta\wedge \eta = \left(\hat{d}\beta +D^*\alpha \right) \wedge \eta +  \hat{d}\alpha,
\]
where $\hat{d}$ denotes the Chevalley-Eilenberg differential of $\frs$, and the action of $D\in\mathrm{Der}(\frs)$ on $\Lambda^2\frs^*$ is defined as follows 
\[
D^*\alpha(x_1,x_2) = -\alpha(Dx_1,x_2) - \alpha(x_1,Dx_2),
\]
for all $x_1,x_2\in\frs$. From this, we see that $\frs$ has an SU(3)-structure defined by the pair 
\begin{equation}\label{SU3ideal}
\omega \coloneqq \hat{d}\beta + D^*\alpha,\qquad \psip \coloneqq \hat{d}\alpha.
\end{equation} 
In particular, $\psip$ is an exact stable 3-form on $\frs$. 

\smallskip

The previous discussion highlights some necessary conditions imposed by the existence of an exact $\G_2$-structure on a seven-dimensional (strongly) unimodular solvable Lie algebra $\frg = \frs\rtimes_D\R$. 
To show Theorem \ref{ThmSolvable}, we can then proceed as follows.   
The ideal $\frs$ is a six-dimensional unimodular solvable non-nilpotent Lie algebra. 
The Lie algebras satisfying these properties are classified up to isomorphism, so we can investigate each case separately. 
First, we determine which of these Lie algebras do not admit any negative stable exact 3-form, and we rule them out. 
For each one of the remaining Lie algebras, we consider the generic derivation $D\in\Der(\frs)$ and we determine the conditions guaranteeing that the extension 
$\frg = \frs\rtimes_D\R$ is strongly unimodular. 
Then, we investigate whether a generic pair $(\omega,\psip)$ of the form \eqref{SU3ideal} can define an SU(3)-structure on $\frs$. If this is not the case, then we rule $\frs$ out. 
As we will see, none of the six-dimensional unimodular solvable non-nilpotent Lie algebras passes both tests. 
From this, the proof of Theorem \ref{ThmSolvable} follows. 

\begin{remark}
If the derivation $D\in\Der(\frs)$ is not nilpotent, then the nilradical of $\frg =  \frs\rtimes_D\R$ coincides with the nilradical $\frn$ of $\frs$. Otherwise, it is given by $\frn\rtimes_D\R$. 
\end{remark}

The structure equations of all six-dimensional unimodular solvable non-nilpotent Lie algebras can be found in the literature. 
Here, we consider the list given in \cite[Appendix A]{Boc}, where the classification results of various preceding works have been meticulously collected. 
The structure equations of the decomposable unimodular Lie algebras can be determined from the tables A.1, A.3, A.4, A.5, A.6, A.7 in \cite{Boc},  
and they are listed in Table \ref{tabledec} of Appendix \ref{appendix}, where the unimodular Lie algebra $\frg_{4,2}^{-2}$ not appearing in Table A.1 of \cite{Boc} is also included (see \cite{Mub}). 
The structure equations of the unimodular indecomposable Lie algebras are given in the tables A.9 -- A.19 of \cite{Boc}, and we refer the reader to it for the list.  

In what follows, the non-abelian Lie algebras are denoted as in \cite{Boc}, namely we use the symbol $\frg_{n,k}$ to denote the $k^{th}$ Lie algebra of dimension $n$ appearing in the list of  non-isomorphic 
$n$-dimensional solvable Lie algebras. 
Moreover, superscripts like $\frg^{p,q,\ldots}$ denote the values of the real parameters on which a Lie algebra depends. 
Finally, we denote the $n$-dimensional abelian Lie algebra $n\frg_1$ by $\R^n$. 

\smallskip

We will investigate the decomposable and the indecomposable case separately. 

\subsection{The decomposable case}
We begin considering the Lie algebras listed in Table \ref{tabledec} of Appendix \ref{appendix}. 
The next result shows that most of them cannot occur as an ideal of a unimodular solvable Lie algebra admitting exact $\G_2$-structures. 

\begin{proposition}\label{lambdadec}
A six-dimensional unimodular decomposable solvable non-nilpotent Lie algebra $\frs$ admits negative stable exact 3-forms if and only if it is isomorphic to one of the following: 
$\frg_{3,4}^{-1} \oplus \frg_{3,4}^{-1}$, $\frg_{5,30}^{-  4 / 3} \oplus \R$, $\frg_{5,33}^{-1,-1}\oplus \R$, $\frg_{5,35}^{-2,0}\oplus \R$. 
\end{proposition}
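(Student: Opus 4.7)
The plan is to proceed case by case through the six-dimensional unimodular decomposable solvable non-nilpotent Lie algebras collected in Table~\ref{tabledec}. For each candidate $\frs$, I would parametrize a generic 2-form $\alpha \in \Lambda^2 \frs^*$ as $\alpha = \sum_{i<j} a_{ij}\, e^{ij}$ in the coframe $(e^1,\ldots,e^6)$ fixed by the structure equations, use the Chevalley--Eilenberg differential to compute the exact 3-form $\psi = \hat d \alpha$, and then extract the endomorphism $K_\psi$ from the identity $K_\psi(v)\otimes \Omega = A(\iota_v \psi \wedge \psi)$ for a fixed volume form $\Omega$. Since $K_\psi^2 = \lambda(\psi)\,\mathrm{Id}_\frs$ holds for every 3-form, stability of $\psi$ is equivalent to $\det K_\psi \neq 0$, and the question reduces to deciding whether $K_\psi$ can be made invertible by some choice of the $a_{ij}$.

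A systematic simplification is available for the splittings of the form $\frh \oplus \R$: the $\R$-summand contributes a closed covector (say $e^6$), and the image of $\hat d$ is confined to $\Lambda^3\frh^* \oplus \Lambda^2\frh^* \wedge e^6$. This typically makes the computation of $K_\psi$ tractable and, in most cases, reveals a basis vector $v \in \frs$ lying in $\ker K_\psi$ for every choice of the $a_{ij}$. Such an obstruction forces $\det K_\psi = 0$ identically, ruling out stability. I would apply this template to eliminate as many Lie algebras from Table~\ref{tabledec} as possible in a single sweep, and treat the residual cases by inspecting the precise shape of $\hat d$ on $\Lambda^2\frs^*$ to exhibit a further structural kernel direction for $K_\psi$.

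For the four survivors $\frg_{3,4}^{-1}\oplus\frg_{3,4}^{-1}$, $\frg_{5,30}^{-4/3}\oplus\R$, $\frg_{5,33}^{-1,-1}\oplus\R$, and $\frg_{5,35}^{-2,0}\oplus\R$, it suffices to exhibit one concrete $\alpha$ whose derivative $\psi = \hat d\alpha$ satisfies $\det K_\psi \neq 0$. The three $5+1$ cases should fall to this routine. For the $3+3$ case $\frg_{3,4}^{-1}\oplus\frg_{3,4}^{-1}$ the closed-direction shortcut is unavailable, but the two 3-dimensional summands are isomorphic and symmetric enough that a diagonal ansatz for $\alpha$ respecting the swap symmetry of the two factors is expected to produce a stable exact 3-form directly.

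The main obstacle is the uniform negative statement, that is, showing that $K_\psi$ is singular for \emph{every} exact 3-form on each of the many Lie algebras to be eliminated. A determinant polynomial in up to $15$ parameters can easily fail to be identically zero for reasons invisible to casual inspection, so the argument must always come from a structural feature of $\frs$: either a canonical vector in $\ker K_\psi$ forced by the structure equations, or a block decomposition of $K_\psi$ along the direct sum of $\frs$ that keeps the relevant determinant identically zero. Organising the list so that each Lie algebra is eliminated by the simplest available such argument is the bulk of the work.
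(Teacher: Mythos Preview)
Your overall strategy---parametrize a generic $\alpha$, compute $\psi=\hat d\alpha$, and analyse $K_\psi$ case by case---is exactly the approach taken in the paper. The difference lies in the invariant you propose to test. You reduce the question to whether $K_\psi$ can be made invertible, i.e.\ whether $\lambda(\psi)\neq 0$. But the proposition, read in the context of the paper (where $\psi$ is meant to serve as the real part of a complex volume form in an $\mathrm{SU}(3)$-structure), is really asking whether $\lambda(\psi)$ can be made \emph{negative}. These are not the same test, and the discrepancy is not academic here.

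Concretely, your ``find a kernel direction for $K_\psi$'' template can only detect the identity $\lambda\equiv 0$; it cannot see the sign of $\lambda$ when $K_\psi$ is generically invertible. The paper's proof shows that for eight of the Lie algebras in Table~\ref{tabledec} (for instance $\frg_{3,4}^{-1}\oplus\frg_{3,1}$, where $\lambda(\hat d\alpha)=4a_{14}^2a_{24}^2$, or $\frg_{3,5}^{0}\oplus\frg_{3,5}^{0}$, where $\lambda$ is a product of two sums of squares) the invariant $\lambda(\hat d\alpha)$ is a nonnegative polynomial that is strictly positive on a dense set. For these algebras $K_\psi$ is invertible for generic $\alpha$, so your kernel argument would fail and you would be left with twelve survivors rather than four. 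What actually rules these eight cases out is the explicit computation of $\lambda(\hat d\alpha)$ as a polynomial in the $a_{ij}$ and the observation that it is manifestly $\geq 0$. To make your plan work you must replace the invertibility criterion by the sign criterion on $\lambda$, and for the algebras where $\lambda\not\equiv 0$ you will need to compute $\lambda$ itself rather than look for a kernel vector.
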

\begin{proof}
Let $\frs$ be one of the Lie algebras appearing in Table \ref{tabledec}, and let $(e^1,\ldots,e^6)$ be the basis of $\frs^*$ used to describe the structure equations. 
We consider a generic 2-form $\alpha = \sum_{1\leq i<j\leq6} a_{ij}e^{ij}\in\Lambda^2\frs^*$, where $a_{ij}\in\R$, and we compute its Chevalley-Eilenberg differential $\hat{d}\alpha$ using the structure equations. 
Then, we determine the quartic polynomial $\lambda(\hat{d}\alpha)$ as explained before.  
Notice that we are free to choose the 6-form $e^{123456}$ in place of a generic non-zero element in $\Lambda^6\frs^*$ representing $\omega^3$, 
since the sign of $\lambda(\hat{d}\alpha)$ does not depend on the choice of orientation for $\frs$. 
The Lie algebras for which $\lambda(\hat{d}\alpha)\geq0$ are the following
\[
\renewcommand\arraystretch{1.5}
\begin{array}{rl}
\frg_{3,4}^{-1} \oplus \frg_{3,1}:		&	\lambda(\hat{d}\alpha) = 4 a_{14}^2 a_{24}^2; \\
\frg_{3,4}^{-1} \oplus \frg_{3,5}^0:	&	\lambda(\hat{d}\alpha) = 4\left(a_{14}^2+a_{15}^2\right)\left(a_{24}^2+a_{25}^2\right); \\
\frg_{3,5}^0 \oplus \frg_{3,1}:		&	\lambda(\hat{d}\alpha) = \left(a_{14}^2+a_{24}^2\right)^2; \\
\frg_{3,5}^0 \oplus \frg_{3,5}^0:		&	\lambda(\hat{d}\alpha) = \left( (a_{14}+a_{25})^2 +  (a_{15} - a_{24})^2 \right) \left( (a_{14} - a_{25})^2 +  (a_{15} + a_{24})^2 \right); \\ 
\frg_{5,19}^{p,-2p - 2} \oplus \R:		&	\lambda(\hat{d}\alpha) = 4\,(1+p)^2\, a_{14}^2 a_{16}^2; \\
\frg_{5,23}^{-4}\oplus \R:			&	\lambda(\hat{d}\alpha) = 16\, a_{14}^2 a_{16}^2; \\
\frg_{5,25}^{4,4p}\oplus \R:		&	\lambda(\hat{d}\alpha) = 16\,p^2\, a_{14}^2 a_{16}^2; \\
\frg_{5,28}^{- 3 / 2} \oplus \R:		&	\lambda(\hat{d}\alpha) = a_{14}^2 a_{16}^2.
\end{array}
\]
As for the Lie algebras of Table \ref{tabledec} that are not isomorphic to any of the previous ones nor to one of  
$\frg_{3,4}^{-1} \oplus \frg_{3,4}^{-1}$, $\frg_{5,30}^{-  4 / 3} \oplus \R$, $\frg_{5,33}^{-1,-1}\oplus \R$, $\frg_{5,35}^{-2,0}\oplus \R$, we have $\lambda(\hat{d}\alpha) = 0$. 
On the remaining Lie algebras, there exist exact 3-forms $\hat{d}\alpha$ such that $\lambda(\hat{d}\alpha) < 0$. 
The expression of $\lambda(\hat{d}\alpha)$ for these Lie algebras will be given in the proofs of the next propositions. 
\end{proof}

We are left with the decomposable Lie algebras $\frg_{3,4}^{-1} \oplus \frg_{3,4}^{-1}$, $\frg_{5,30}^{-  4 / 3} \oplus \R$, $\frg_{5,33}^{-1,-1}\oplus \R$, $\frg_{5,35}^{-2,0}\oplus \R$.
We divide the discussion into three propositions, as we use different strategies to rule them out.

\begin{proposition}
The Lie algebras $\frg_{5,30}^{-  4 / 3} \oplus \R$ and $\frg_{5,35}^{-2,0}\oplus \R$ cannot occur as an ideal of a strongly unimodular solvable Lie algebra admitting exact $\G_2$-structures. 
\end{proposition}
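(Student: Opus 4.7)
My plan is to follow the roadmap laid out just before this proposition: rule out each of the two six-dimensional Lie algebras $\frs$ as a codimension-one ideal of a strongly unimodular solvable Lie algebra $\frg=\frs\rtimes_D\R$ supporting an exact $\G_2$-structure, by showing that the induced candidate pair \eqref{SU3ideal} on $\frs$ cannot satisfy all of the SU(3) compatibility conditions $a)$--$d)$ listed in the preamble to Theorem \ref{ThmSolvable}.

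For each $\frs\in\{\frg_{5,30}^{-4/3}\oplus\R,\;\frg_{5,35}^{-2,0}\oplus\R\}$, the first step is to write down the structure equations in the basis $(e^1,\ldots,e^6)$ used in \cite{Boc}, identify the nilradical $\frn\subset\frs$ together with its descending central series $\frn=\frn^{\sst 0}\supset\frn^{\sst 1}\supset\cdots$, and parameterize the generic derivation $D\in\Der(\frs)$. Strong unimodularity of $\frg$ amounts to the condition $\tr\bigl(\ad_X|_{\frn^{\sst i}/\frn^{\sst i+1}}\bigr)=0$ for every $X\in\frg$ and every $i\ge 0$; since the contribution from elements of $\frs$ is already constrained by the structure of $\frs$, the decisive information comes from the traces of $D$ restricted to each quotient $\frn^{\sst i}/\frn^{\sst i+1}$. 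This produces a small linear system on the entries of $D$, which I would solve explicitly.

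With $D$ constrained in this way, I would then compute, for generic $\alpha\in\Lambda^2\frs^*$ and $\beta\in\frs^*$, the exact 3-form $\psip=\hat d\alpha$ and the 2-form $\omega=\hat d\beta - D^*\alpha$, and restrict to the open parameter locus where $\lambda(\psip)<0$. Outside this locus the 3-form $\psip$ is not a stable form of the required type, so no SU(3)-structure with this $\psip$ exists; inside it, the almost complex structure $J$ is given unambiguously by \eqref{Jpsi} up to orientation. The explicit form of $\lambda(\psip)$ that I need is already set up in the proof of Proposition \ref{lambdadec}.

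The heart of the argument is then to confront the parameterized pair $(\omega,\psip)$ with the remaining SU(3) requirements: nondegeneracy $\omega^3\neq 0$, primitivity $\psip\wedge\omega=0$, and positive-definiteness of $g=\omega(\cdot,J\cdot)$. I expect the cleanest obstruction in both cases to come either from $\omega^3$ vanishing identically once strong unimodularity has been imposed, or from $g$ failing to be positive definite along a coordinate direction, in the spirit of the vanishings $b_\f(e_i,e_i)=0$ exploited in the proof of Proposition \ref{NonSolDec}. The main obstacle is computational rather than conceptual: one has to carry the strong-unimodularity relations on $D$ through the full SU(3) analysis and then, on the remaining parameter locus with $\lambda(\psip)<0$, exhibit either a uniform polynomial identity forcing $\omega^3=0$ or a uniform null direction for $g$. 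Once the constraints on $D$ and on $\alpha$, $\beta$ are made explicit, this reduces to checking a finite list of polynomial identities, which is tedious but routine.
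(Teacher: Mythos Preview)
Your plan is correct and mirrors the paper's proof exactly: parameterize $D\in\Der(\frs)$, impose strong unimodularity via the traces on $\frn^{\sst i}/\frn^{\sst i+1}$, and then rule out the candidate pair $(\omega,\psip)$ by showing that $g=\omega(\cdot,J\cdot)$ cannot be positive definite. The obstruction is always the second of your two anticipated mechanisms (not $\omega^3\equiv 0$): for $\frg_{5,30}^{-4/3}\oplus\R$ one finds $\omega(e_i,Je_i)=0$ for $i=1,2,3$, while for $\frg_{5,35}^{-2,0}\oplus\R$ one gets $\omega(e_1,Je_1)\,\omega(e_6,Je_6)\le 0$.
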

\begin{proof}
Let $\frs = \frg_{5,30}^{-  4 / 3} \oplus \R$, and consider the basis $(e^1,\ldots,e^6)$ of $\frs^*$ for which the structure equations are those given in Table \ref{tabledec}, namely
\[
\frg_{5,30}^{-  4 / 3} \oplus \R = \left( - e^{24} - \frac 23 e^{15}, - e^{34} + \frac 13 e^{25}, \frac 43 e^{35}, - e^{45}, 0,0 \right).
\]
Let $\mathcal{B} = (e_1,\ldots,e_6)$ be the basis of $\frs$ with dual basis $(e^1,\ldots,e^6)$. 
The generic derivation $D\in\Der(\frs)$ has the following matrix representation with respect to the basis $\mathcal{B}$ 
\[
D = \left( 
\begin{array}{cccccc}
a_1	&	a_2	&	0	&	a_3	&	a_4	&	0	\\
0	&	a_5	&	a_2	&	a_6	&	-\tfrac13 a_3	&	0	\\
0	&	0	&	2a_5 - a_1	&	0	&	-\tfrac43 a_6	&	0	\\
0	&	0	&	0	&	a_1 - a_5	&	-a_2	&	0	\\
0	&	0	&	0	&	0	&	0	&	0	\\
0	&	0	&	0	&	0	&	a_7	&	a_8	
\end{array}
\right),
\]
where $a_1,\ldots,a_8\in\R$. 

The nilradical of $\frs$ is $\frn = \langle e_1,e_2,e_3,e_4,e_6 \rangle$, and it has the following descending central series
\[
\frn^{\sst0} = \frn,\quad \frn^{\sst1} = \langle e_1,e_2\rangle,\quad  \frn^{\sst2} = \langle e_1\rangle, \quad \frn^{\sst k} =  \{0\},~k\geq3.
\]
From this, we see that the Lie algebra $\frg = \frs \rtimes_D\R$ is strongly unimodular only when $a_1 = a_5 = a_8 = 0$. 

We now consider a generic 2-form  $\alpha = \sum_{1\leq i<j\leq6} \alpha_{ij}e^{ij}$ and a generic 1-form $\beta = \sum_{i=1}^6\beta_k e^k$ on $\frs$, and we compute the forms
\[
\omega =  \hat{d}\beta + D^*\alpha,\qquad \psip = \hat{d}\alpha.
\]
Then, for the values of the parameters $\alpha_{ij}$ and $b_k$ for which  $\omega^3\neq0$ and 
\[
 \lambda(\hat{d}\alpha) = \frac49 \left(4\alpha_{13}^{2} \alpha_{16}^{2}+ 2 \alpha_{16} \alpha_{26} \alpha_{12} \alpha_{13}
									-10 \alpha_{12} \alpha_{16}^{2} \alpha_{23}-4\alpha_{12}^{2} \alpha_{16} \alpha_{36}+\alpha_{12}^{2} \alpha_{26}^{2}\right) < 0, 
\]
we determine the almost complex structure $J$ using the formula \eqref{Jpsi}. 
Notice that the sign of $J$ depends on $\omega^3$ being a positive or negative multiple of the volume form $e^{123456}$. 
Now, a direct computation shows that $\omega(e_i,Je_i) = 0$, for $i=1,2,3$. Therefore, the pair $(\omega,\psi)$ cannot define an SU(3)-structure on $\frs$. 

\smallskip

Similar computations for the Lie algebra 
\[
\frg_{5,35}^{-2,0}\oplus \R = \left(2 e^{14}, - e^{24} - e^{35}, e^{25}- e^{34}, 0,0,0\right)
\]
show that the derivation $D$ must have the following matrix representation
\[
D = \left(
\begin{array}{cccccc}
a_{1} & 0 & 0 & a_{2} & 0 & 0 
\\
 0 & a_{3} & a_{4} & a_{5} & a_{6} & 0 
\\
 0 & -a_{4} & a_{3} & a_{6} & -a_{5} & 0 
\\
 0 & 0 & 0 & 0 & 0 & 0 
\\
 0 & 0 & 0 & 0 & 0 & 0 
\\
 0 & 0 & 0 & a_{7} & a_{8} & -a_{1}-2 a_{3} 
\end{array}\right),
\]
and that whenever
\[
\lambda(\hat{d}\alpha) = 4 \left(\alpha_{26}^{2}+\alpha_{36}^{2}\right) \left(\alpha_{12}^{2}+\alpha_{13}^{2}\right)+16\, \alpha_{23} \alpha_{16} \left(\alpha_{12} \alpha_{36}-\alpha_{13} \alpha_{26}\right)<0,
\]
we have
\[
\omega(e_1,Je_1)\,\omega(e_6,Je_6) =	\frac{1}{\lambda(\hat{d}\alpha)}
								16\, \alpha_{16}^{2} \left(\alpha_{26}^{2}+\alpha_{36}^{2}\right) \left(\alpha_{12}^{2}+\alpha_{13}^{2}\right) \left(a_{1}+2 a_{3}\right)^{2}  \leq 0,
\]
whence the thesis follows. 
\end{proof}

\begin{proposition}\label{3434}
The Lie algebra $\frg_{3,4}^{-1} \oplus \frg_{3,4}^{-1}$ cannot occur as an ideal of a strongly unimodular solvable Lie algebra admitting exact $\G_2$-structures. 
\end{proposition}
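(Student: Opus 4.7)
The plan is to follow the same general scheme used for the preceding Lie algebras. I would start by recording the structure equations
\[
\frs = \frg_{3,4}^{-1}\oplus\frg_{3,4}^{-1} = \left(-e^{13},\, e^{23},\, 0,\, -e^{46},\, e^{56},\, 0\right),
\]
and observing that the nilradical $\frn = \langle e_1, e_2, e_4, e_5\rangle$ is abelian, so its descending central series collapses after one step. Strong unimodularity of $\frg = \frs\rtimes_D\R$ therefore reduces to the single trace condition $\tr(D|_\frn) = 0$, together with the (automatic) vanishing of $\tr(\ad_{e_3}|_\frn)$ and $\tr(\ad_{e_6}|_\frn)$. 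Next I would write down the generic $D\in\Der(\frs)$: because the two $\frg_{3,4}^{-1}$-summands carry isomorphic weight data under their respective centralizer actions, $\Der(\frs)$ contains ``swap'' components mixing $\langle e_1, e_2\rangle$ with $\langle e_4, e_5\rangle$, so the derivation algebra is larger than in the previous cases.

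For generic $\alpha = \sum_{i<j} a_{ij} e^{ij} \in \Lambda^2\frs^*$ and $\beta\in\frs^*$ I would form the candidate $\SU(3)$-pair
\[
\psip = \hat d\alpha, \qquad \omega = \hat d\beta - D^*\alpha,
\]
compute the quartic polynomial $\lambda(\psip)$ to isolate the stable region $\lambda(\psip)<0$, and determine $J$ there via \eqref{Jpsi}. The key structural observation I would exploit is that $de^3 = de^6 = 0$ while each of $de^1, de^2, de^4, de^5$ contains exactly one of $e^3$ or $e^6$; consequently every exact 3-form on $\frs$ belongs to the ideal generated by $e^3$ and $e^6$, and in particular the restriction of $\psip$ to $\Lambda^3\frn^*$ vanishes identically. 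This rigidity constrains the almost complex structure $J$ determined by $\psip$, essentially forcing $\frn$ to be a $J$-related subspace of $\frs$.

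I would then combine this structural constraint on $J$ with the explicit expression of $\omega$ to exhibit a pair of vectors $v, w\in\frn$ such that
\[
\omega(v, Jv)\,\omega(w, Jw) \le 0,
\]
mirroring the strategy used for $\frg_{5,35}^{-2,0}\oplus\R$ in the preceding proposition and thereby obstructing the positive-definiteness of the bilinear form $g = \omega(\cdot, J\cdot)$.

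The main obstacle is bookkeeping. The enlarged derivation algebra, together with the numerous free parameters in $\alpha$ and $\beta$, makes a direct computation of $J$ and $\omega$ unwieldy, and an unorganized approach risks a proliferation of subcases. The challenge is to leverage the decomposition
\[
\psip = e^3\wedge P_3 + e^6\wedge P_6 + e^{36}\wedge \g, \qquad P_3, P_6\in \Lambda^2\frn^*, \ \g\in \frn^*,
\]
which captures the vanishing of $\psip|_{\Lambda^3\frn^*}$, in order to read off the action of $K_\psip$ on $\frn$ and then pair it with the matching pieces of $\omega$ so as to produce a single transparent invariant obstruction rather than a lengthy case split.
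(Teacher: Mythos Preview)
Your structural observation that $\psip=\hat d\alpha$ lies in the ideal generated by $e^3,e^6$, and hence that $J\frn\subset\frn$, is correct and useful. However, two points need correction, and the central strategy is not the one the paper uses.

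First, your claim about ``swap'' components of $\Der(\frs)$ is false. A quick check with $[e_1,e_3]=e_1$ shows that any derivation sending $e_1$ into $\langle e_4,e_5\rangle$ must vanish on $e_1$; the strongly unimodular derivation matrix the paper obtains is genuinely block-triangular with respect to the two summands, so the feared enlargement of $\Der(\frs)$ does not occur.

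Second, and more importantly, you propose to find the obstruction \emph{inside} $\frn$, i.e.\ vectors $v,w\in\frn$ with $\omega(v,Jv)\,\omega(w,Jw)\le 0$. You never verify that such vectors exist, and the paper does \emph{not} find the obstruction there. Instead, the paper introduces a family of nilpotent derivations $S\in\Der(\frs)$ (acting only in the directions $e_3\mapsto\langle e_1,e_2\rangle$ and $e_6\mapsto\langle e_4,e_5\rangle$) and sets $F=\exp(S)\in\Aut(\frs)$. One then solves a $4\times4$ linear system in the parameters of $S$---nonsingular precisely because $\lambda(\psip)=16\,\alpha_{14}\alpha_{15}\alpha_{24}\alpha_{25}<0$---so that the coefficients of $e^{136},e^{236},e^{346},e^{356}$ in $F^*\psip$ vanish. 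The same linear relations force $F^*\omega(e_3,e_6)=0$, and for the normalized $F^*\psip$ one checks directly that $J_{F^*\psip}e_3\in\langle e_3,e_6\rangle$. Hence $\omega(x,Jx)=0$ for $x=Fe_3$, a vector with a nonzero $e_3$-component and therefore \emph{not} in $\frn$. In short: the key idea you are missing is the automorphism trick that simultaneously simplifies $\psip$ and kills the relevant $\omega$-coefficient; the null direction for $g$ lives in the complementary plane $F\langle e_3,e_6\rangle$, not in $\frn$.
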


\begin{proof}
Let $\frs = \frg_{3,4}^{-1} \oplus \frg_{3,4}^{-1}$, and let $\mathcal{B}^*=(e^1,\ldots,e^6)$ be the basis of $\frs^*$ for which the structure equations are 
\[
 \frg_{3,4}^{-1} \oplus \frg_{3,4}^{-1} = \left(-e^{13}, e^{23},0,-e^{46}, e^{56},0\right). 
\]
Let $\mathcal{B}=(e_1,\ldots,e_6)$ be the basis of $\frs$ with dual basis $\mathcal{B}^*$. 
Then, the nilradical of $\frs$ is the abelian ideal $\frn =\langle e_1,e_2,e_4,e_5\rangle$, and the generic derivation $D\in\Der(\frs)$ for which $\frs\rtimes_D\R$ is strongly unimodular 
has the following matrix representation with respect to $\mathcal{B}$
\[
D = \left(
\begin{array}{cccccc}
a_{1} & 0 & a_{2} & 0 & 0 & 0 
\\
 0 & a_{3} & a_{4} & 0 & 0 & 0 
\\
 0 & 0 & 0 & 0 & 0 & 0 
\\
 0 & 0 & 0 & a_{5} & 0 & a_{6} 
\\
 0 & 0 & 0 & 0 & -a_{5}-a_{3}-a_{1} & a_{8} 
\\
 0 & 0 & 0 & 0 & 0 & 0 
\end{array}\right),
\]
where $a_i\in\R$.

We consider a generic 2-form $\alpha = \sum_{1\leq i<j\leq6} \alpha_{ij}e^{ij}$, a generic 1-form $\beta = \sum_{i=1}^6\beta_k e^k$ on $\frs$, and  
the forms $\omega = \hat{d}\beta + D^*\alpha$ and $\psi = \hat{d}\alpha$. We have
\[
\begin{split}
\omega 	=&	-\alpha_{12}  ( a_1 + a_3 )e^{12}+(- a_1  \alpha_{13} - a_4  \alpha_{12} -\beta_1 )e^{13}-\alpha_{14}  ( a_1 + a_5 )e^{14}+ \alpha_{15}  ( a_5 + a_3 )e^{15} \\[2pt]
		&	+ (- a_1  \alpha_{16} - a_6  \alpha_{14} - a_8  \alpha_{15} )e^{16}+ (a_2  \alpha_{12} - a_3  \alpha_{23} +\beta_2 )e^{23} -\alpha_{24}  ( a_5 + a_3 )e^{24}\\[2pt]
		&	+ \alpha_{25}  ( a_1 + a_5 )e^{25}+(- a_3  \alpha_{26} - a_6  \alpha_{24} - a_8  \alpha_{25} )e^{26}+(- a_2  \alpha_{14} - a_4  \alpha_{24} - a_5  \alpha_{34} )e^{34}\\[2pt]
		&	+( a_1  \alpha_{35} - a_2  \alpha_{15} + a_3  \alpha_{35} - a_4  \alpha_{25} + a_5  \alpha_{35} )e^{35}+(- a_2  \alpha_{16} - a_4  \alpha_{26} - a_6  \alpha_{34} - a_8  \alpha_{35} )e^{36}\\[2pt]
		&	+ \alpha_{45}  ( a_1 + a_3 )e^{45}+(- a_5  \alpha_{46} - a_8  \alpha_{45} -\beta_4 )e^{46}+( a_1  \alpha_{56} + a_3  \alpha_{56} + a_5  \alpha_{56} + a_6  \alpha_{45} +\beta_5 )e^{56},
\end{split} 
\]
and
\[
\begin{split}
\psip 	=& -\alpha_{14}  e^{134}   -\alpha_{15}  e^{135} - \alpha_{16}  e^{136}+ \alpha_{14}  e^{146} - \alpha_{15}  e^{156} +\alpha_{24}  e^{234} + \alpha_{25}  e^{235}   \\
		& + \alpha_{26}  e^{236} + \alpha_{24}  e^{246} -\alpha_{25}  e^{256}  +\alpha_{34} e^{346} -\alpha_{35} e^{356}. 
 \end{split} 
\]
Choosing the volume form $e^{123456}$, we compute 
\[
\lambda(\psip) = 16\, \alpha_{14}\, \alpha_{15}\, \alpha_{24}\, \alpha_{25}.
\]
Assuming that $\lambda(\psip)<0$, we determine the almost complex structure $J$ induced by $\psip$ and the chosen orientation. 
We now show that there exists a nonzero vector $x\in\frs$ such that $\omega(x,Jx)=0$. From this, the thesis follows. 

Let us consider the family of nilpotent derivations $S\in\Der(\frs)$ having the following matrix representation with respect to $\mathcal{B}$:  
\[
S= \left( 
\begin{array}{cccccc}
 0&0& s_1 & 0&  0 & 0\\[2pt]
 0 & 0 &  s_2 & 0 &  0 & 0\\[2pt]
0 &  0 &  0 & 0 & 0 & 0\\[2pt]
0 &  0 &  0 &  0 &0&  s_3\\[2pt]
 0 & 0 &  0 &  0 &  0 &  s_4\\[2pt]
 0 & 0 &  0 &  0 &  0 &  0 
 \end{array}\right),
\]
where $s_i\in\R$, and let  $F \coloneqq \exp (S)\in\mathrm{Aut}(\frs)$. 
From the pair $(\omega,\psip)$, we obtain the pair $(F^*\omega,F^*\psip)$ with associated almost complex structure $J_{F^*\psip} = F^{-1} \circ J \circ F.$

We claim that there exists a choice of the real numbers $s_i$ for which $F^* \omega (e_3,e_6) =0$ and  $J_{F^*\psip} (e_3) \in \la e_3,e_6 \ra$.
This implies that $\omega(x,Jx)=0$ for $x=Fe_3$, as
\[
\omega(Fe_3,JFe_3) = F^*\omega(e_3,J_{F^*\psip}e_3) = 0. 
\]
Comparing 
\[
\begin{split}
F^*\psip 	=&~  \alpha_{25}  e^{235} -\alpha_{25}  e^{256} +\alpha_{24}  e^{234} + \alpha_{24}  e^{246} -\alpha_{15}  e^{135}- \alpha_{15}  e^{156}-\alpha_{14}  e^{134}+ \alpha_{14}  e^{146} \\
		&	- (\alpha_{14}  s_3 + \alpha_{15}  s_4 + \alpha_{16} ) e^{136} + (\alpha_{24}  s_3+\alpha_{25}  s_4+\alpha_{26} )  e^{236}  \\
		& + (\alpha_{14}  s_1+\alpha_{24} s_2+\alpha_{34}) e^{346} -(\alpha_{15}  s_1+ \alpha_{25}  s_2+\alpha_{35}) e^{356},
\end{split}
\]
and
\[
\begin{split}
F^*\omega(e_3,e_6)	=	& -a_1  s_1  (\alpha_{14}  s_3 +\alpha_{16} ) + a_1  s_4 (\alpha_{25}  s_2+\alpha_{35}) + a_3  s_4 (\alpha_{15}  s_1+\alpha_{35} )- a_3 s_2 (\alpha_{24}  s_3+\alpha_{26} ) \\
					& -(a_5  s_3 +a_6)(\alpha_{14}  s_1+\alpha_{24}  s_2+\alpha_{34})+ (a_5  s_4- a_8) (\alpha_{15}  s_1+\alpha_{25}  s_2+\alpha_{35}) \\
					&-a_2  (\alpha_{14}  s_3+\alpha_{15}  s_4+\alpha_{16} )-a_4  (\alpha_{24}  s_3+\alpha_{25}  s_4+\alpha_{26} ),
					\end{split}
\]
we see that the latter is zero if the coefficients of $f^{136}$, $f^{236}$, $f^{346}$, $f^{356}$ in the expression of $F^*\psip$ vanish, 
namely if the following linear system in $s_1,s_2,s_3,s_4$ is compatible
\[
\left\{\begin{array} {l}
\alpha_{14}  s_3+ \alpha_{15}  s_4=-\alpha_{16} , \\
\alpha_{24}  s_3+\alpha_{25}  s_4=-\alpha_{26} , \\
\alpha_{14}  s_1+\alpha_{24}  s_2=-\alpha_{34} , \\
\alpha_{15}  s_1+\alpha_{25}  s_2=-\alpha_{35}. 
\end{array}
\right.
\]
Under the assumption $\lambda(\psip)=16 \, \alpha_{14}\, \alpha_{15}\, \alpha_{24}\, \alpha_{25} <0$, the system has a unique solution $(\bar{s}_1,\bar{s}_2, \bar{s}_3,\bar{s}_4)$. 
Let $\bar{F}\in\mathrm{Aut}(\frs)$ be the automorphism corresponding to the choice $s_i=\bar{s}_i$, for $i=1,2,3,4$. Then, 
\[
\bar{F}^*\psip = \alpha_{25}  e^{235} -\alpha_{25}  e^{256} +\alpha_{24}  e^{234} + \alpha_{24}  e^{246} -\alpha_{15}  e^{135}- \alpha_{15}  e^{156}-\alpha_{14}  e^{134}+ \alpha_{14}  e^{146},
\]
and $\bar{F}^*\omega(e_3,e_6)=0$. A computation then shows that  $J_{\bar{F}^*\psip} e_3 \in \la e_3, e_6 \ra$, and the claim follows.  
\end{proof}

\begin{proposition}
The Lie algebra $\frg_{5,33}^{-1,-1}\oplus\R$ cannot occur as an ideal of a strongly unimodular solvable Lie algebra admitting exact $\G_2$-structures. 
\end{proposition}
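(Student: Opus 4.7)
The plan is to follow verbatim the general scheme developed in the preceding propositions of this section, with the endgame modeled on Proposition \ref{3434}. First, fix a basis $(e^1,\ldots,e^6)$ of $\frs^*$ in which the structure equations of $\frs = \frg_{5,33}^{-1,-1}\oplus\R$ are those listed in Table \ref{tabledec}, and let $(e_1,\ldots,e_6)$ be its dual basis. Compute the nilradical $\frn$ of $\frs$ together with the terms of its descending central series, and write down the generic derivation $D\in\Der(\frs)$ as a matrix. Imposing that $\ad_X$ has vanishing trace on each quotient $\frn^{\sst i}/\frn^{\sst i+1}$ for all $X\in\frg=\frs\rtimes_D\R$ pins down the free parameters of $D$ and guarantees strong unimodularity of $\frg$.

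Next, take a generic $\alpha=\sum_{i<j}\alpha_{ij}e^{ij}\in\Lambda^2\frs^*$ and a generic $\beta=\sum_k\beta_k\,e^k\in\frs^*$, and form the pair
\[
\omega \,=\, \hat d\beta - D^*\alpha, \qquad \psip \,=\, \hat d\alpha,
\]
as prescribed by \eqref{SU3ideal}. Choosing the volume form $e^{123456}$, compute the quartic invariant $\lambda(\psip)$, and restrict attention to the open set where $\lambda(\psip)<0$, on which the almost complex structure $J$ given by \eqref{Jpsi} is defined. In this regime, the pair $(\omega,\psip)$ defines an SU(3)-structure on $\frs$ only if $g\coloneqq\omega(\cdot,J\cdot)$ is positive definite; I aim to obstruct this by exhibiting a nonzero vector $x\in\frs$ with $\omega(x,Jx)=0$.

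The mechanism, just as in Proposition \ref{3434}, will be a change of basis by an automorphism $F=\exp(S)\in\Aut(\frs)$ produced from a suitable family of nilpotent derivations $S\in\Der(\frs)$. The replacement $(\omega,\psip)\mapsto(F^*\omega,F^*\psip)$ does not change the sign of $\lambda$, it transports $J$ to $J_{F^*\psip}=F^{-1}\circ J\circ F$, and $\omega(Fv,JFv)=F^*\omega(v,J_{F^*\psip}v)$. The idea is to pick the free parameters in $S$ so as to kill enough of the "mixed" coefficients in $F^*\psip$ that $J_{F^*\psip}$ preserves a concrete $2$-plane $\langle e_i,e_j\rangle$, while simultaneously forcing $F^*\omega(e_i,e_j)=0$; the compatibility of this system should reduce precisely to the nondegeneracy condition $\lambda(\psip)<0$, via a $4\times 4$ linear system whose determinant is a multiple of $\lambda(\psip)$.

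The main obstacle is the computational one of guessing, from the explicit matrix of $D$ dictated by strong unimodularity, both the right family of nilpotent derivations $S$ and the right pair of basis vectors $(e_i,e_j)$ for which the two requirements above can be met simultaneously. Once these are in hand, the verification is a direct but lengthy calculation, which I expect will mimic, essentially line by line, the one carried out for $\frg_{3,4}^{-1}\oplus\frg_{3,4}^{-1}$; the outcome is a nonzero $x = Fe_i\in\frs$ with $\omega(x,Jx)=0$, contradicting the positivity of $g$ and thereby ruling out $\frg_{5,33}^{-1,-1}\oplus\R$.
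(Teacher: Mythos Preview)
Your plan mirrors the argument for $\frg_{3,4}^{-1}\oplus\frg_{3,4}^{-1}$, but the paper does \emph{not} treat $\frg_{5,33}^{-1,-1}\oplus\R$ that way, and there is a real reason. The crucial hinge in Proposition~\ref{3434} is that once the nilpotent derivation $S$ kills the four ``mixed'' coefficients of $F^*\psip$, the value $F^*\omega(e_3,e_6)$ is an explicit linear combination of \emph{exactly those same} coefficients, hence vanishes automatically; moreover the $4\times4$ linear system controlling this has determinant essentially equal to $\lambda(\psip)$. For $\frg_{5,33}^{-1,-1}\oplus\R$ neither feature survives: the analogous expression for $F^*\omega(e_i,e_j)$ does not factor through the coefficients you are killing, and there is no single linear system whose solvability is guaranteed by $\lambda(\psip)<0$. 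So the expectation that the computation ``will mimic, essentially line by line'' Proposition~\ref{3434} is unfounded; you are likely to end up with a residual locus of parameters on which no isotropic vector $x$ is produced (compare the split into $t_2\neq0$ and $t_2=0$ in Proposition~\ref{6101}).

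The paper instead brings in the compatibility condition $\omega\wedge\psip=0$, which you do not use at all. From $\omega\wedge\psip=0$ one first extracts the nonvanishing $\alpha_{12}\alpha_{13}\alpha_{16}\alpha_{23}\alpha_{26}\alpha_{34}\alpha_{36}\neq0$ and the orthogonality relations $g(e_i,e_j)=0$ for distinct $i,j\in\{1,2,3,6\}$; three of the five scalar equations determine $\beta_1,\beta_2,\beta_3$, and the remaining two force a case split (A)/(B) according to which of two quartic expressions in the $\alpha_{ij}$ is nonzero. Only after all this does an automorphism $F=\exp(S)$ enter, and its role is different from yours: it is used to put $F^*\psip$ into the normal form \eqref{rhosimplified} while preserving the orthogonality relations. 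The endgame is then not the exhibition of a null vector but a matrix comparison: assuming $F^*g$ were positive definite, one writes $F^*g=P^tP$, classifies all $P$ compatible with $J_{F^*\psip}$ and the orthogonality constraints, and checks that the resulting $2$-form $\omega_P$ never equals $F^*\omega$. Your proposal misses both the essential use of $\omega\wedge\psip=0$ and this final positive-definiteness obstruction, and as written would stall before reaching a contradiction.
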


\begin{proof}

Let $\frs = \frg_{5,33}^{-1,-1}\oplus\R$ and let $\mathcal{B}^*=(e^1,\ldots,e^6)$ be the basis of $\frs^*$ for which the structure equations are 
\[
(-e^{14}, -e^{25},e^{34}+e^{35},0, 0,0). 
\]
Let $\mathcal{B}=(e_1,\ldots,e_6)$ be the basis of $\frs$ with dual basis $\mathcal{B}^*$. 
Then, the nilradical of $\frs$ is the abelian ideal $\frn =\langle e_1,e_2,e_3,e_6\rangle$, and the generic derivation $D\in\Der(\frs)$ for which $\frg = \frs\rtimes_D\R$ is strongly unimodular 
must have the following matrix representation with respect to $\mathcal{B}$
\[
D = \left(
\begin{array}{cccccc}
a_{1} & 0 & 0 & a_{2}& 0 & 0 
\\
 0 & a_{3} & 0 & 0 & a_{4} & 0 
\\
 0 & 0 & a_{5} & a_{6} & a_{6}& 0 
\\
 0 & 0 & 0 & 0 & 0 & 0 
\\
 0 & 0 & 0 & 0 &0&0
\\
 0 & 0 & 0 & a_7 & a_{8} &  -a_{5}-a_{3}-a_{1}  
\end{array}\right),
\]
where $a_i\in\R$.

We consider a generic 2-form $\alpha = \sum_{1\leq i<j\leq6} \alpha_{ij}e^{ij}$ and a generic 1-form $\beta = \sum_{i=1}^6\beta_k e^k$ on $\frs$, and we let 
$\omega = \hat{d}\beta + D^*\alpha$ and $\psi = \hat{d}\alpha$.
Then 
\[
\begin{split}
\omega =& -\alpha_{12} \left(a_{1}+a_{3}\right) e^{12} -\alpha_{13} \left(a_{1}+a_{5}\right) e^{13}+\left(-\alpha_{14} a_{1}-\alpha_{13} a_{6}-\alpha_{16} a_{7}-\beta_{1}\right) e^{14} \\
		& 	+\left(-\alpha_{15} a_{1}-\alpha_{12} a_{4}-\alpha_{13} a_{6}-\alpha_{16} a_{8}\right) e^{15}+\alpha_{16} \left(a_{3}+a_{5}\right) e^{16} 
			-\alpha_{23} \left(a_{3}+a_{5}\right) e^{23} \\ 
		&	+\left(\alpha_{12} a_{2}-\alpha_{24} a_{3}-\alpha_{23} a_{6}-\alpha_{26} a_{7}\right) e^{24} 
			+\left(-\alpha_{25} a_{3}-\alpha_{23} a_{6}-\alpha_{26} a_{8}-\beta_{2}\right)  e^{25} \\
		&	+\alpha_{26} \left(a_{1}+a_{5}\right) e^{26} +\left(\alpha_{13} a_{2}-\alpha_{34} a_{5}-\alpha_{36} a_{7}+\beta_{3}\right) e^{34} +\alpha_{36} \left(a_{1}+a_{3}\right) e^{36} \\
		&	+\left(\alpha_{23} a_{4}-\alpha_{35} a_{5}-\alpha_{36} a_{8}+\beta_{3}\right) e^{35} 
			 +\left(\alpha_{46} a_{1}-\alpha_{16} a_{2}+\alpha_{46} a_{3}+\alpha_{46} a_{5}-\alpha_{36} a_{6}\right) e^{46}\\
		&	+\left(-\alpha_{15} a_{2}+\alpha_{24} a_{4}+\alpha_{34} a_{6}-\alpha_{35} a_{6}+\alpha_{56} a_{7}-\alpha_{46} a_{8}\right) e^{45} \\
		&	+\left(\alpha_{56} a_{1}+\alpha_{56} a_{3}-\alpha_{26} a_{4}+\alpha_{56} a_{5}-\alpha_{36} a_{6}\right) e^{56}, 
\end{split}
\]
and
\[
\begin{split}
\psip 	&=   \alpha_{12}  (e^{125} + e^{124})  - \alpha_{13} e^{135}  - \alpha_{15} e^{145} - \alpha_{16} e^{146}  -\alpha_{23} e^{234} + \alpha_{24} e^{245}  - \alpha_{26} e^{256}   \\
 		&\quad + (\alpha_{35} -\alpha_{34}) e^{345} + \alpha_{36} (e^{346} + e^{356} ). 
\end{split}
\]
We will prove that there are no values of the parameters $a_r,\alpha_{ij},\beta_k$ for which the pair $(\omega, \psip)$ defines an $\SU(3)$-structure. 

Let us assume that $\omega^3\neq0$ and 
\[
\lambda(\psip) = 4 \left( \alpha_{26} \alpha_{36} \alpha_{12} \alpha_{13} - \alpha_{23} \alpha_{36} \alpha_{12} \alpha_{16} + \alpha_{13} \alpha_{16} \alpha_{23} \alpha_{26}\right) <0. 
\]
Up to changing the sign of $\omega$, we can assume that the corresponding orientation is $e^{123456}$. 
We can then determine the almost complex structure $J$ induced by the pair $(\omega,\psip)$ and consider the bilinear form $g = \omega(\cdot,J\cdot)$. 
Let $g_{ij}\coloneqq g(e_i,e_j)$ be the components of the matrix associated with $g$ with respect to the basis $\mathcal{B}$. 
We must have 
\begin{enumerate}[i)]
\item $\alpha_{12} \, \alpha_{13}  \, \alpha_{16} \, \alpha_{23}  \, \alpha_{26} \, \alpha_{36} \neq 0$,
\end{enumerate}
as otherwise $g_{ii}=0$, for at least one $i \in \{1,2,3,6\}$.

\smallskip

We now focus on the compatibility condition $\omega\W\psip=0$, which is equivalent to a system of five polynomial equations in the variables  $a_r,\alpha_{ij},\beta_k$. 
We compute
\[
\begin{split}
(\o \wedge \psip)(e_1,e_2,e_3,e_4,e_6) &= 2(\alpha_{16} \alpha_{23}(a_3+a_5) -\alpha_{12}\alpha_{36}(a_1+a_3) ) \eqqcolon z_1 ,\\
(\o \wedge \psip)(e_1,e_2,e_3,e_5,e_6) &= -2(\alpha_{13}  \alpha_{26} (a_1+a_5)+ \alpha_{12} \alpha_{36} (a_1+a_3)) \eqqcolon z_2,
\end{split}
\]
and we use the equalities
\[
\begin{split}
&\dfrac{g_{12}}{\alpha_{12}} = -\dfrac{g_{21}}{\alpha_{12}} =  \dfrac{g_{63}}{\alpha_{36}}=  - \dfrac{g_{36}}{\alpha_{36}}= \left(z_1-z_2\right)\sqrt{-\lambda(\psip)}, \\ 
&\dfrac{g_{13}}{\alpha_{13}} = -\dfrac{g_{31}}{\alpha_{13}} = \dfrac{g_{62}}{\alpha_{26}} = -\dfrac{g_{26}}{\alpha_{26}} = -z_1\,\sqrt{-\lambda(\psip)},  \\ 
&\dfrac{g_{16}}{\alpha_{16}} = -\dfrac{g_{61}}{\alpha_{16}} = \dfrac{g_{32}}{\alpha_{23}} =  -\dfrac{g_{23}}{\alpha_{23}} = -z_2.\,\sqrt{-\lambda(\psip)},
\end{split}
\]
to conclude that every solution of the system must give
\begin{enumerate}[i)]
\item[ii)] $g(e_i,e_j)=0$, for $i,j\in\{1,2,3,6\}$ with $i\neq j$. 
\end{enumerate}

The three equations 
\[
(\omega \wedge \psip)(e_2,e_3,e_4,e_5,e_6)=0,\quad (\omega \wedge \psip)(e_1,e_3,e_4,e_5,e_6)=0,\quad (\omega \wedge \psip)(e_1,e_2,e_4,e_5,e_6)=0,
\]
determine a linear system in the variables $\beta_1,\beta_2,\beta_3$, which has a unique solution under the constraint i). 
The remaining equations 
\begin{equation} \label{sysAandB}
\left \{ \begin{array} {l} 
(\omega \wedge \psip)(e_1,e_2,e_3,e_4,e_5)=0,\\
(\omega \wedge \psip)(e_1,e_2,e_3,e_4,e_6)=0,\\
(\omega \wedge \psip)(e_1,e_2,e_3,e_5,e_6)=0,
\end{array}
\right.
\end{equation}
do not contain the variables $\beta_k$'s, and we can solve the system \eqref{sysAandB} in the following cases:
\begin{enumerate}[A)]
\item\label{caseA} $\alpha_{12}  \alpha_{13} \alpha_{26} \alpha_{36}+   \alpha_{12} \alpha_{16}  \alpha_{23}\alpha_{36}  + \alpha_{13}  \alpha_{16} \alpha_{23} \alpha_{26}  \neq 0$,  
namely  \eqref{sysAandB} is  a compatible linear system in the variables $a_1,a_2,a_5$;
\item\label{caseB} $ \alpha_{12}  \alpha_{13}  \alpha_{26} \alpha_{36}  -  \alpha_{12}  \alpha_{16}  \alpha_{23} \alpha_{36} - \alpha_{13}  \alpha_{16}  \alpha_{23} \alpha_{26}  \neq 0$,  
namely \eqref{sysAandB}  is a compatible linear system in the variables $a_1, a_2, a_3$.
\end{enumerate}
Indeed, if the two polynomials above are both zero, then $\alpha_{12}  \, \alpha_{13}  \, \alpha_{26}  \, \alpha_{36} =0$, which contradicts the condition i). 
In both cases \ref{caseA}) and \ref{caseB}), we can use the constraint to solve the system \eqref{sysAandB}. 
Then, we have $\omega\W\psip=0$, and the bilinear form $g= \omega(\cdot,J\cdot)$ is symmetric. 
We now show that $g$ is never positive definite.

To simplify the computations, we can proceed as follows. 
Let us consider the derivation $S\in\Der(\frs)$ whose matrix with respect to the basis $\mathcal{B}$ is 
\[
\left( \begin{array}{cccccc} 
0&0& 0 & s_1 & 0 & 0 \\[2pt]
 0 & 0 &  0 & 0 &  s_2 & 0\\[2pt]
0 &  0 &  0 & s_3 &s_3& 0\\[2pt]
0 &  0 &  0 &  0 & 0 &  0\\[2pt]
 0 & 0 &  0 &  0 &  0 &  0\\[2pt]
 0 & 0 &  0 &  0 &  0 &  0 
 \end{array}
 \right ),
\]
where
\[
\begin{array}{c} 
s_1=\dfrac{\alpha_{12}  \alpha_{34}  - \alpha_{12}  \alpha_{35} + \alpha_{13} \alpha_{24}  -\alpha_{15} \alpha_{23}}{2  \alpha_{12} \alpha_{13}},\\[8pt] 
s_2= -\dfrac{\alpha_{12} \alpha_{34}   - \alpha_{12}  \alpha_{35}- \alpha_{13}  \alpha_{24} + \alpha_{15} \alpha_{23}}{2 \alpha_{12} \alpha_{13}}, \\[8pt]
s_3=\dfrac{\alpha_{12}  \alpha_{34}   - \alpha_{12}  \alpha_{35}  - \alpha_{13} \alpha_{24}  - \alpha_{15} \alpha_{23}}{2  \alpha_{12} \alpha_{13} }.
\end{array}
\]
Then, the automorphism $F=\exp(S)\in\Aut(\frs)$ is such that
\[
F^* \psip = \alpha_{12} (e^{124}+ e^{125}) - \alpha_{13}  e^{135}  - \alpha_{16}  e^{146}   - \alpha_{23}  e^{234} - \alpha_{26}  e^{256}  + \alpha_{36}\left(e^{346} + e^{356}\right).
\]
Notice that $\lambda\left(F^*\psi\right) = \lambda\left(\psi\right)$.
Moreover, this choice of $F$ guarantees that the condition ii) is satisfied also by the bilinear form $F^*g = F^*\omega(\cdot,J_{F^*\psip}\cdot)$, 
where $J_{F^*\psip} = F^{-1}\circ J\circ F$.  In addition, the subspaces $V_1=\la e_1,e_2,e_3,e_6 \ra$ and $V_2=\la e_4,e_5\ra$ are $J_{F^*\psip}$-invariant.

Let  $Q$  be the matrix associated to $F^* g$ with respect to the basis $\mathcal{B}$. 
We will show that there are no values of the parameters $a_r,\alpha_{ij},\beta_k$ for which $Q$ is symmetric and positive definite. 
If that was the case, then it would be possible to construct a $F^*g$-orthonormal basis $(v_1,\ldots,v_6)$ starting from $\mathcal{B}$ in such a way that 
\[
v_i = h_{ii}\,e_i,~i=1,2,3,6,\quad  v_4 = \sum_{i=1}^4 h_{i4}e_i + h_{64}e_6,\quad v_5 = \sum_{i=1}^6 h_{i5}e_i,
\]
where $h_{kk}>0$, for $1\leq k \leq6$. 
Consequently, there would exist an invertible $6\times6$ matrix $P=(p_{ij})$, given by the inverse of $H=(h_{ij})$, such that 
\[
Q=P^tP,
\]
and whose entries satisfy the following conditions
\begin{equation}\label{Pentcond}
p_{kk}>0,\quad p_{54}=0,\quad p_{ki} = 0, \mbox{ for } 1\leq k \leq 6 \mbox{ and } i\neq k \mbox{ in } \{1,2,3,6\}.
\end{equation}
Moreover, the following quantities should all be positive  
\begin{equation}\label{Qposdef}
\frac{F^*g(e_1,e_1)}{F^*g(e_6,e_6)} = -\frac{\alpha_{12}\alpha_{13}}{\alpha_{26}\alpha_{36}},\quad 
 \frac{F^*g(e_2,e_2)}{F^*g(e_6,e_6)} = \frac{\alpha_{12}\alpha_{23}}{\alpha_{16}\alpha_{36}},\quad 
 \frac{F^*g(e_3,e_3)}{F^*g(e_6,e_6)} = -\frac{\alpha_{13}\alpha_{23}}{\alpha_{16}\alpha_{26}}.
%
\end{equation}
Let $\mathscr{E}\subset\R^6$ be the (non-empty) open subset where all of the previous conditions hold. Notice that condition i) is satisfied by every  $6$-tuple 
$(\alpha_{12}, \alpha_{13}, \alpha_{16}, \alpha_{23}, \alpha_{26}, \alpha_{36} )\in\mathscr{E}$. 

\smallskip

Since $(J_{F^*\psip})^tQ = - QJ_{F^*\psip}$, we determine all $6\times6$ invertible matrices $P=(p_{ij})$ whose entries satisfy the conditions \eqref{Pentcond} and for which 
\[
Z\coloneqq (J_{F^*\psip})^t (P^tP) + (P^tP) J_{F^*\psip}
\] 
is the zero matrix. 
On $\mathscr{E}$, this boils down to solving a system of 17 equations in the unknowns $p_{ij}$ under the constraints \eqref{Pentcond}. 
The sub-system  $\{Z_{ij}=0\st i,j=1,2,3,6,~i<j\}$ has the following solution 
\[
p_{11} = p_{66}\sqrt{-\frac{\alpha_{12}\alpha_{13}}{\alpha_{26}\alpha_{36}}},\quad p_{22} = p_{66}\sqrt{\frac{\alpha_{12}\alpha_{23}}{\alpha_{16}\alpha_{36}}},\quad 
p_{33} = p_{66}\sqrt{ -\frac{\alpha_{13}\alpha_{23}}{\alpha_{16}\alpha_{26}}}, 
\]
for any choice of $p_{66}>0$. 
The positivity of the quantities in \eqref{Qposdef} together with conditions \eqref{Pentcond} ensure that the sub-system $\{Z_{i4}=0\st i=1,2,3,6\}$ can be solved with respect to the unknowns $p_{i4}$, for $i=1,2,3,6$, 
and one has that the solution 
also solves the sub-system $\{Z_{i5}=0\st i=1,2,3,6\}$. We are then left with the equations $Z_{44}=0,~Z_{45}=0,~Z_{55}=0$. The equation $Z_{44}=0$ has the following solution 
\[
p_{44} = \left(1+\frac{\alpha_{16}\alpha_{23}}{\alpha_{12}\alpha_{36} }\right) p_{45}, 
\]
and by the positivity of the quantities in \eqref{Qposdef} we must have $p_{45}>0$. 
Finally, the equations $Z_{45}=0=Z_{55}$ hold if and only if
\[
\left(\alpha_{12}\alpha_{36}\right)^2 p_{55}^2 + \frac{\lambda\left(F^*\psi\right)}{4} p_{45}^2 = 0, 
\] 
and we thus obtain
\[
p_{55} = \frac{\sqrt{-\lambda\left(F^*\psi\right)}}{2 |\alpha_{12}\alpha_{36}|} p_{45},
\]

Summing up, when $(\alpha_{12}, \alpha_{13}, \alpha_{16}, \alpha_{23}, \alpha_{26}, \alpha_{36} )\in\mathscr{E}$, 
then all $6\times6$ matrices $P$ satisfying the conditions \eqref{Pentcond} and $ (J_{F^*\psip})^t (P^tP) + (P^tP) J_{F^*\psip}=0$ 
constitute a family  $\mathscr{P}$ of matrices  depending on two positive real parameters $p_{66}$ and $p_{45}$, and on real parameters $p_{15},p_{25},p_{35},p_{65}$.

\smallskip

Now, if $Q$ was symmetric and positive definite, then there would exist a matrix $P\in\mathscr{P}$  such that $Q = P^tP$. In particular, the following identity should hold 
\begin{equation}\label{QJP}
QJ_{F^*\psip} = P^tPJ_{F^*\psip}.
\end{equation}
Assume that $(\alpha_{12}, \alpha_{13}, \alpha_{16}, \alpha_{23}, \alpha_{26}, \alpha_{36} )\in\mathscr{E}$ is given. 
In the cases \ref{caseA}) and \ref{caseB}), which ensure that $Q$ is symmetric, we consider the system of equations corresponding to the matrix identity \eqref{QJP}, 
where $P$ is any  matrix in $\mathscr{P}$. 
This consists in 15 equations in the unknowns $\alpha_{15}$, $\alpha_{24}$, $\alpha_{34}$, $\alpha_{35}$, $\alpha_{46}$, $\alpha_{56}$, 
$a_4$, $a_6$, $a_7$, $a_8$, and $a_3$, in case \ref{caseA}), or $a_5$, in case \ref{caseB}).  
With the aid of a computer algebra system, it is possible to show that there are no values of the unknowns for which the system can be solved. This gives a contradiction.  

For the reader's convenience, we now describe the relevant steps leading to the conclusion.   
Let $M\coloneqq QJ_{F^*\psip}  - P^tPJ_{F^*\psip}$. In case \ref{caseA}), the entries $M_{ij}$, for $i,j\in \{1,2,3,6\}$ with $i<j$, are all proportional to the same polynomial and one has $M_{ij}=0$ if and only if 
\[
a_3 = \frac{p_{66}^2}{\alpha_{16}\alpha_{26}\alpha_{36}}
	\frac{\alpha_{12}  \alpha_{13} \alpha_{26} \alpha_{36}+   \alpha_{12} \alpha_{16}  \alpha_{23}\alpha_{36}  + \alpha_{13}  \alpha_{16} \alpha_{23} \alpha_{26} }{\lambda\left(F^*\psi\right)}.
\]
We now consider the equations $M_{i4}=0$ and $M_{i5}=0$, $i\in\{1,2,3,6\}$, which can be seen as a linear system of eight equations in the unknowns $a_4$, $a_6$, $a_7$ and $a_8$. 
This system admits a unique solution on $\mathscr{E}$.  
We are left with the equation $M_{45}=0$.  
On $\mathscr{E}$, $M_{45}$ can be seen as a second degree polynomial in the unknowns $\alpha_{15}$, $\alpha_{24}$, $\alpha_{34}$, $\alpha_{35}$, $\alpha_{46}$, $\alpha_{56}$. 
We claim that $M_{45}$ is always non-zero. 
Thinking of $M_{45}$ as a polynomial in $\alpha_{15}$, we first compute its discriminant $\Delta_1$, which is a second degree polynomial in the remaining unknowns. 
To show the claim, we think of $\Delta_1$ as a second degree polynomial in $\alpha_{24}$ and we prove that it is always negative. 
First, we observe that the leading coefficient of $\Delta_1$ is negative on $\mathscr{E}$. Indeed, its sign is determined by  
\[
-\alpha_{16}\alpha_{23}\left(\alpha_{12}\alpha_{36} - \alpha_{13}\alpha_{26} + \alpha_{16}\alpha_{23}\right), 
\]
and the quantity inside the brackets has the same sign of $\alpha_{16}\alpha_{23}$ on $\mathscr{E}$. 
The discriminant $\Delta_2$ of $\Delta_1$ can be seen as a quadratic form in $p_{15}$, $p_{25}$, $p_{35}$, $p_{45}$, $p_{65}$, which is negative definite on $\mathscr{E}$ since $p_{45}>0$.  
Therefore, $\Delta_1<0$. 
An analogous discussion shows the thesis also in case \ref{caseB}).

\end{proof}

\subsection{{The indecomposable case}}
We now consider six-dimensional {\em indecomposable} unimodular solvable non-nilpotent Lie algebras. 
Their structure equations with respect to a suitable basis $(X_1,\ldots,X_6)$ can be found in the tables A.9--A.19 of \cite{Boc}, where the Lie algebras are gathered together according to their nilradical.
Notice that there are a few misprints in \cite{Boc} that must be corrected as follows:      
\begin{enumerate}[$\bullet$] 
\item Lie algebra $\frg_{6,55}^{-4}$ of Table A.12: $[X_3,X_6]=5 X_3$, $[X_5,X_6] = -4 X_5$;
\item Lie algebra $\frg_{6,83}^{0,l}$ of Table A.15: $[X_2,X_6] = l X_2+X_3$;
\item Lie algebra $\frg_{6,135}^{0,-4}$ of Table A.19: $[X_3,X_5]=X_2$. 
\end{enumerate}
In the following, we will keep on denoting the basis of a Lie algebra by $(e_1,\ldots,e_6)$ and the corresponding dual basis by $(e^1,\ldots,e^6)$.

\smallskip

The next general result rules out the Lie algebras listed in tables A.$9$, A.$10$, A.$15$ of \cite{Boc}. 

\begin{proposition}\label{IndecLambda}
Let $\frs$ be a six-dimensional 
unimodular solvable non-nilpotent Lie algebra, and denote by $\frn$ its nilradical. 
Then, every exact $3$-form $\hat{d}\alpha\in\Lambda^3\frs^*$ is not stable, whenever $\frn$ is isomorphic to one of the Lie algebras $\R^5$, $\frg_{3,1}\oplus\R^2$, $\frg_{5,4}$, 
where $\frg_{3,1}$ and $\frg_{5,4}$ denote the three-dimensional and the five-dimensional Heisenberg Lie algebra, respectively. 
\end{proposition}

\begin{proof}   
We have $\frs\cong\frn\rtimes_S\R$, where $S\in\Der(\frn)$ is a derivation of the five-dimensional nilpotent ideal $\frn$. 
We choose a basis $(e_1,\ldots,e_6)$ of $\frs$ so that $\frn = \langle e_1,\ldots,e_5\rangle$ and $\R = \langle e_6\rangle$. Then, we can write every $2$-form $\alpha\in\Lambda^2\frs^*$ with respect to the dual basis 
$(e^1,\ldots,e^6)$ as follows 
\[
\alpha = \alpha_\frn + \alpha'\W e^6,
\]
where $\alpha_\frn\in\Lambda^2\frn^*$ and $\alpha'\in\frn^*$. We will prove that $\hat{d}\alpha$ is never stable by showing the existence of a non-zero vector $x\in\frn$ for which  
the 2-form $\iota_x\hat{d}\alpha$ has rank at most two.

When $\frs$ is almost abelian, namely $\frn\cong\R^5$,  the Chevalley-Eilenberg differential of $\alpha$ is given by 
\[
\hat{d}\alpha = S^*\alpha_\frn\W e^6. 
\]
Thus, for every non-zero vector $x\in\frn$ we have $\iota_x\hat{d}\alpha = (\iota_xS^*\alpha_\frn)\W e^6$. Consequently, $\iota_x\hat{d}\alpha\W \iota_x\hat{d}\alpha=0$ and the claim follows.  

%

Let $d_\frn$ denote the Chevalley-Eilenberg differential of $\frn$. When $\frn\cong\frg_{3,1}\oplus\R^2$ or $\frn\cong\frg_{5,4}$, 
we can choose the basis $(e^1,\ldots, e^5)$ of $\frn^*$ in such a way that $d_\frn e^k=0$, for $k=2,3,4,5$, and $d_\frn e^1 = e^{23}$ when $\frn\cong\frg_{3,1}\oplus\R^2$ while 
$d_\frn e^1 = e^{24}+e^{35}$ when $\frn\cong\frg_{5,4}$. In both cases, we then obtain 
\[
\iota_{e_1}\hat{d}\alpha = \left(\iota_{e_1}S^*\alpha_\frn\right) \W e^6, 
\]
and from this we see that $\iota_{e_1}\hat{d}\alpha$ has rank at most two.  
\end{proof}

For every Lie algebra $\frs$ not isomorphic to one of those considered in the previous proposition, 
we first have to compute the expression of the generic derivation $D\in\Der(\frs)$, consider the extension $\frg = \frs\rtimes_D\R$, and determine for which derivations $D$ it is strongly unimodular. 
Then, we have to show that there are no pairs $(\omega,\psip)$ of the form \eqref{SU3ideal} defining an SU(3)-structure on $\frs$. 
We shall deal with this problem in the next propositions. 

\begin{proposition}\label{PropTabA11}
The  indecomposable unimodular solvable non-nilpotent Lie algebras listed in \cite[Table $\mathrm{A}.11$]{Boc} $(\mbox{nilradical } \frg_{4,1}\oplus\R)$ 
cannot occur as an ideal of a strongly unimodular solvable Lie algebra admitting exact $\G_2$-structures. 
\end{proposition}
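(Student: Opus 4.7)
The plan is to replicate the case-by-case template established in Propositions \ref{lambdadec}--\ref{3434} for the indecomposable Lie algebras in Table A.11 of \cite{Boc}, which all share nilradical $\frn \cong \frg_{4,1}\oplus\R$. For each such $\frs$, I would fix the basis $(e_1,\dots,e_6)$ dual to the basis given by the structure equations, compute the generic derivation $D\in\Der(\frs)$ as an $8$- or $10$-parameter matrix, and then impose that the restriction of $\ad_{e_7}=D$ (together with the ideal's own $\ad$) to each quotient $\frn^{\sst i}/\frn^{\sst i+1}$ be traceless. Because $\frg_{4,1}$ has a two-step descending central series, the strong unimodularity constraint will in each case kill certain diagonal entries of $D$ and impose one linear relation on the remaining ones, severely restricting the admissible $D$'s.

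Next, for a generic $\alpha = \sum_{i<j} \alpha_{ij} e^{ij}\in\Lambda^2\frs^*$ and $\beta = \sum_k \beta_k e^k\in \frs^*$, I would compute $\psip = \hat d\alpha$ and $\omega = \hat d\beta - D^*\alpha$ using the structure equations. Then, choosing the orientation $e^{123456}$ as in the earlier proofs, I would compute the Hitchin invariant $\lambda(\psip)$. My expectation, based on the pattern seen in Proposition \ref{lambdadec} and Proposition \ref{IndecLambda}, is that a substantial fraction of the algebras in Table A.11 already satisfy $\lambda(\psip)\geq 0$ identically; those can be ruled out immediately.

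For the surviving algebras, where $\lambda(\psip)<0$ is attainable, I would compute the induced almost complex structure $J$ from \eqref{Jpsi} and test whether the symmetric form $g=\omega(\cdot,J\cdot)$ can be positive definite. I expect two types of obstructions to appear, exactly as in the decomposable case. The simplest is that some diagonal entry $g(e_i,Je_i)$ vanishes identically (or that a product $g(e_i,Je_i)\,g(e_j,Je_j)$ is non-positive whenever $\lambda(\psip)<0$), as happened for $\frg_{5,30}^{-4/3}\oplus\R$ and $\frg_{5,35}^{-2,0}\oplus\R$. When such a direct obstruction is not visible, I would follow the strategy of Proposition \ref{3434}: exploit a one-parameter family of nilpotent derivations $S\in\Der(\frs)$ whose exponentials $F=\exp(S)\in\Aut(\frs)$ replace $(\omega,\psip)$ by $(F^*\omega,F^*\psip)$ without changing the existence question, and choose the parameters to cancel the ``mixed'' coefficients of $F^*\psip$ (by solving a small linear system whose compatibility is guaranteed by $\lambda(\psip)<0$). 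This brings $\psip$ into a normal form where either the compatibility condition $\omega\wedge\psip=0$ forces a new linear system on the $a_r,\alpha_{ij},\beta_k$ whose solutions make some $g(e_i,Je_i)$ vanish, or the resulting $J_{F^*\psip}$-Hermitian constraint on $g$ becomes inconsistent with the form of $F^*\omega$ dictated by $D$.

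The main obstacle will be purely organizational: Table A.11 contains a sizeable list of algebras (several depending on continuous parameters), and for each one the four linked computations--generic derivation, strong unimodularity, $\lambda(\psip)$, and the symmetric form $g$--must be carried out separately. I anticipate that for the ``most generic'' members of the table, the vanishing of some $g(e_i,Je_i)$ will suffice, but that at least one or two sub-cases (typically those where $\frs$ admits nontrivial semisimple derivations with complex eigenvalues, mimicking $\frg_{3,4}^{-1}\oplus\frg_{3,4}^{-1}$) will require the automorphism-normalization trick, together with a careful elimination over the parameters of $D$ satisfying strong unimodularity, to produce the final contradiction.
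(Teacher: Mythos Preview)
Your plan follows the paper's general template and would in principle succeed, but the paper's actual proof of this proposition is far more economical: it dispatches every algebra in Table~A.11 with a single uniform argument, without any case splitting, $\lambda(\psip)\geq 0$ subcases, or automorphism normalization. The key is that all these algebras share the nilradical $\frn=\langle e_1,\dots,e_5\rangle\cong\frg_{4,1}\oplus\R$, with descending central series $\frn^{\sst1}=\langle e_1,e_2\rangle$, $\frn^{\sst2}=\langle e_2\rangle$. The paper observes that strong unimodularity of $\frs\rtimes_D\R$ forces $e_2\in\ker D$ and $D(\langle e_1,e_3\rangle)\subset\langle e_2\rangle$, while (uniformly across the table) any stable exact $\psip=\hat d\alpha$ has $Je_2\in\langle e_1,e_2,e_3\rangle$. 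Combined with $[e_1,e_2]=[e_2,e_3]=0$, this yields
\[
\omega(e_2,Je_2)=-\beta([e_2,Je_2])-\alpha(De_2,Je_2)-\alpha(e_2,DJe_2)=0
\]
immediately. Your algebra-by-algebra computations would eventually rediscover this vanishing in each instance, but the paper's insight---that the obstruction is dictated entirely by the common nilradical's filtration and hence shared across the whole table---is what collapses the anticipated ``organizational obstacle'' into a few lines.
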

\begin{proof}
Let $\frs$ denote one of the Lie algebras listed in Table $\mathrm{A}.11$ of \cite{Boc}, and let $(e_1,\ldots,e_6)$ be the basis of $\frs$ for which the structure equations are those given in that table. 
The nilradical of $\frs$ is $\frn=\langle e_1,e_2,e_3,e_4,e_5\rangle\cong \frg_{4,1}\oplus\R$, and its descending central series is 
\[
\frn^{\sst0}=\frn,\quad \frn^{\sst1} = \langle e_1,e_2\rangle,\quad \frn^{\sst2}= \langle e_2\rangle,\quad \frn^{\sst i}=\{0\}, i\geq3.
\]
Let $D$ be a generic derivation of $\frs$, and consider the extension $\frg = \frs\rtimes_D\R$. 
Then, a computation shows that $\frg$ is strongly unimodular only when $e_2\in\ker D$ and the image of the restriction of $D$ to the subspace $\langle e_1,e_3\rangle\subset \frs$  is $\langle e_2\rangle$. 
Now, we determine the expression of the almost complex structure $J$ induced by a generic negative stable exact 3-form $\hat{d}\alpha$ and the volume form $e^{123456}$, and we observe that 
$Je_2\in\langle e_1,e_2,e_3\rangle$. Since  $[e_1,e_2]=0=[e_2,e_3]$, we see that 
\[
\omega(e_2,Je_2) = (\hat{d}\beta+D^*\alpha)(e_2,Je_2) = -\beta([e_2,Je_2]) - \alpha(De_2,Je_2) - \alpha(e_2,DJe_2) =0.
\]
Since the previous discussion holds for every $\alpha\in\Lambda^2\frs^*$ and $\beta\in\frs^*$ such that $\omega^3\neq0$ and $\hat{d}\alpha$ is stable, the thesis follows. 
\end{proof}

\begin{proposition}
The  six-dimensional indecomposable unimodular solvable non-nilpotent Lie algebras listed in 
Table $\mathrm{A}.12$ $(\mbox{nilradical } \frg_{5,1})$, $\mathrm{A}.13$ $(\mbox{nilradical } \frg_{5,2})$, $\mathrm{A}.14$ $(\mbox{nilradical } \frg_{5,3})$, 
$\mathrm{A}.16$ $(\mbox{nilradical } \frg_{5,5})$, $\mathrm{A}.18$ and $\mathrm{A}.19$  $(\mbox{nilradical } \frg_{3,1}\oplus\R)$ 
of \cite{Boc} cannot occur as an ideal of a strongly unimodular solvable Lie algebra admitting exact $\G_2$-structures. 
\end{proposition}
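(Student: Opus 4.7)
The plan is to handle each of the Tables A.12 through A.19 separately, following the same four-step template that has worked in Propositions \ref{NonSolDec}, \ref{3434} and \ref{PropTabA11}. For each Lie algebra $\frs$ in these tables:

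\emph{Step 1.} Using the structure equations from \cite{Boc}, compute the generic derivation $D \in \Der(\frs)$, and identify the descending central series $\frn^{\sst i}$ of the nilradical $\frn$ of $\frs$. Strong unimodularity of $\frg = \frs \rtimes_{D} \R$ is equivalent to requiring that $\ad_z$ act with trace zero on each quotient $\frn^{\sst i}/\frn^{\sst i+1}$, where $z$ spans the complement of $\frs$; this fixes a linear subvariety of admissible derivations.

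\emph{Step 2.} Write a generic $\alpha = \sum_{i<j} \alpha_{ij} e^{ij}$ and $\beta = \sum_k \beta_k e^k$, form the candidate pair
\[
\omega = \hat{d}\beta - D^*\alpha, \qquad \psi = \hat{d}\alpha,
\]
and compute $\lambda(\psi)$. Restrict attention to the open set of parameters where $\omega^3 \neq 0$ and $\lambda(\psi) < 0$, as otherwise $\psi$ cannot define an SU(3)-structure. On this set, compute the almost complex structure $J$ from \eqref{Jpsi} and the symmetric bilinear form $g = \omega(\cdot, J\cdot)$.

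\emph{Step 3.} Derive a contradiction with positive definiteness of $g$. For the tables with $5$-dimensional nilradical (A.12, A.13, A.14, A.16), we expect the same mechanism as in Proposition \ref{PropTabA11} to work: the strong unimodularity constraints on $D$, combined with the rigid structure of $\frn^{\sst i}$, force $Je_i \in U$ for some basis vector $e_i$ and some subspace $U$ with $[e_i, U] = 0$ and $DU$ annihilated by $\iota_{e_i}\alpha$, yielding $\omega(e_i, Je_i) = 0$. For the tables A.18 and A.19 (nilradical $\frg_{3,1} \oplus \R$), where the complement of $\frn$ in $\frs$ is $2$-dimensional and $D$ has more freedom, we anticipate that this direct argument fails in some branches; there we plan to use the automorphism-simplification trick from the proofs of Proposition \ref{3434} and of the case $\frs = \frg_{5,33}^{-1,-1} \oplus \R$, namely to act on $\psi$ by a suitable nilpotent $F = \exp(S) \in \Aut(\frs)$ to reduce $F^*\psi$ to a normal form, solve the compatibility equations $F^*\omega \wedge F^*\psi = 0$ case-by-case on the solvable branches of the resulting polynomial system, and check that the matrix of $F^*g$ is never positive definite.

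The principal obstacle will be the combinatorial bulk: each of the six tables contains several one- or two-parameter families, and Step 1 alone requires a separate computation of $\Der(\frs)$ for every one of them. A secondary difficulty is that, for nilradicals admitting more symmetries (notably $\frg_{5,5}$ in Table A.16), the constraints coming from strong unimodularity are weaker, so the ``trivial'' contradiction $\omega(e_i, Je_i) = 0$ may only be achievable after an auxiliary automorphism as above; one must then verify that every branch of the compatibility system $\omega \wedge \psi = 0$, analogous to cases \ref{caseA}) and \ref{caseB}) of the previous proposition, leads to an obstruction to the positive definiteness of $g$.
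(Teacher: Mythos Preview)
Your plan is broadly aligned with the paper's, but Step~1 contains a genuine error that would cause you to get stuck on two of the algebras in Table~A.12. You write that strong unimodularity of $\frg=\frs\rtimes_D\R$ is ``equivalent to requiring that $\ad_z$ act with trace zero on each quotient $\frn^{\sst i}/\frn^{\sst i+1}$, where $z$ spans the complement of $\frs$''. This is not equivalent: Definition~\ref{suDef} asks the trace condition for \emph{every} $X\in\frg$, and the elements of $\frs$ that lie outside the nilradical $\frn$ (for instance $e_6$ when $\frn$ is five-dimensional) give additional constraints that you are omitting. These constraints do not restrict $D$; they restrict the structure constants of $\frs$ itself. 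In the paper this is exactly what happens for $\frg_{6,70}^{4p,p}$ and $\frg_{6,54}^{2+2l,l}$: the condition $\tr(\ad_{e_6}|_{\frn^{\sst1}})=0$ forces $p=0$ and $l=-1$ respectively. Only after imposing $p=0$ does one get $\lambda(\hat d\alpha)\geq0$ on $\frg_{6,70}^{4p,p}$, and only after imposing $l=-1$ does the identity $\omega(e_i,Je_i)=0$ hold on $\frg_{6,54}^{2+2l,l}$. With your version of Step~1 you would be working with the full parameter families, where neither conclusion is available.

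Your guess about where the hard cases lie is also off. The paper disposes of every algebra in Tables A.13, A.14, A.16, A.18 and A.19, and all of A.12 except the two algebras just named, by the direct computation $\omega(e_i,Je_i)=0$ for some basis vector $e_i$; no automorphism trick is needed anywhere in this proposition. The exceptional behaviour is concentrated in Table~A.12 (nilradical $\frg_{5,1}$), not in A.18--A.19, and it is resolved by the extra strong-unimodularity constraint on the parameters of $\frs$ rather than by passing to a normal form for $\psi$.
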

\begin{proof}
Among the Lie algebras mentioned in the statement, $\frg_{6,54}^{2+2l,l}$, $\frg_{6,70}^{4p,p}$ and $\frg_{6,65}^{4l,l}$ (all with nilradical $\frg_{5,1}$) are the only ones depending on a real parameter.  
When $\frs$ is one of these Lie algebras, then $\frg = \frs\rtimes_D\R$ is strongly unimodular only for a certain value of the parameter. 
As we will see, this fact will be relevant to rule out two of them, namely $\frg_{6,54}^{2+2l,l}$ and $\frg_{6,70}^{4p,p}$. 
Thus, we begin assuming that $\frs$ is any of the Lie algebras considered in the statement, with the exception of $\frg_{6,54}^{2+2l,l}$ and $\frg_{6,70}^{4p,p}$. 

For each Lie algebra $\frs$, we consider the basis $(e_1,\ldots,e_6)$ for which the structure equations are those given in \cite{Boc}, 
and the generic derivation $D\in\Der(\frs)$ such that $\frg= \frs\rtimes_D\R$ is strongly unimodular. 
We fix the volume form $e^{123456}$, and we compute the almost complex structure $J$ induced by the generic negative stable 3-form $\psip=\hat{d}\alpha$. 
Then, arguing as in the proof of Proposition \ref{PropTabA11}, we consider the generic non degenerate 2-form $\omega = \hat{d}\beta + D^*\alpha$, 
and we observe that the properties of $D$ and $J$ allow us to single out  (at least) one basis vector $e_i$ in the nilradical of $\frs$ such that $\omega(e_i,Je_i)=0$. 
For the sake of clarity, we give the details in the case when $\frs$ is isomorphic to the Lie algebra $\frg_{6,55}^{-4}$. 
The discussion in the remaining cases is similar, but it depends both on the specific expressions of $D$ and $J$ one obtains and on the structure equations of the nilradical of $\frs$. 

The nilradical of  $\frg_{6,55}^{-4}$ is $\frg_{5,1}$, and we can choose a basis $(e_1,\ldots,e_5)$ of it  in such a way that the only non-zero brackets are 
$[e_3,e_5] =e_1 $ and $[e_4,e_5]=e_2$. Now, a generic derivation $D$ of $\frs=\frg_{6,55}^{-4}$ for which $\frg = \frs\rtimes_D\R$ is strongly unimodular satisfies 
$De_3\in\langle e_1\rangle$ and $\langle e_1,e_2\rangle\subset \ker D$. 
Moreover, the almost complex structure $J$ induced by a generic negative stable exact 3-form $\hat{d}\alpha$ and the volume form $e^{123456}$ satisfies $Je_1\in\langle e_1,e_2,e_3\rangle$. 
Therefore, we have
\[
\omega(e_1,Je_1) = -\beta([e_1,Je_1]) - \alpha(De_1,Je_1) - \alpha(e_1,DJe_1) =0,
\] 
and the claim follows. 

We are then left with the Lie algebras $\frg_{6,70}^{4p,p}$ and $\frg_{6,54}^{2+2l,l}$, where a different type of approach is needed in order to rule them out. 

If $\frs = \frg_{6,70}^{4p,p}$, we consider the basis $(e^1,\ldots,e^6)$ of $\frs^*$ for which the structure equations are the following
\[
\left( e^{26} -p\,e^{16} - e^{35}, -e^{16}-p\,e^{26}-e^{45}, 3p\,e^{36}+e^{46},3p\,e^{46}-e^{36},-4p\,e^{56},0 \right),\quad p\in\R. 
\]
The nilradical of $\frs$ is $\frn = \langle e_1,e_2,e_3,e_4,e_5\rangle \cong \frg_{5,1}$, and the only non-trivial terms in its descending central series are 
\[
\frn^{\sst0} = \frn,\quad \frn^{\sst1} = \langle e_1,e_2\rangle. 
\]
Moreover, the generic derivation of $\frs$ has the following matrix representation
\[
D = \left(\begin{array}{cccccc}
a_{1} & a_{2} & a_{3} & 0 & a_{4} & a_{5} 
\\
 -a_{2} & a_{1} & 0 & a_{3} & a_{6} & a_{7} 
\\
 0 & 0 & a_{8} & a_{2} & 0 & -3 p a_{4}-a_{6} 
\\
 0 & 0 & -a_{2} & a_{8} & 0 & -3 p a_{6}+a_{4} 
\\
 0 & 0 & 0 & 0 & -a_{8}+a_{1} & -4p a_{3}   
\\
 0 & 0 & 0 & 0 & 0 & 0 
\end{array}\right).
\]
In order to obtain a strongly unimodular extension $\frg = \frs\rtimes_D\R$, we must have $a_1 = 0 = a_8$ and $0 = \tr(\ad_{e_6}|_{\frn^{\sst1}}) = 2p$  
(notice that the nilradical of $\frg$ coincides with $\frn$ when $a_1 = 0 = a_8$). 
We can then conclude observing that the quartic polynomial associated with the generic exact 3-form $\hat{d}\alpha$ on $\frs$ is 
\[
\begin{split}
\lambda(\hat{d}\alpha)	&= 4\, \alpha_{12}^{2} \left(\left(\alpha_{13}-\alpha_{24}\right)^{2} + \left(\alpha_{14}+\alpha_{23}\right)^{2}\right) \\
					&\quad -16\,\alpha_{12}^2\, p^2\left( 3\alpha_{12}\alpha_{34} + \alpha_{13}\alpha_{24} - \alpha_{14}^2 + \alpha_{14}\alpha_{23} - \alpha_{23}^2  \right),
\end{split}
\]
and thus $\lambda(\hat{d}\alpha)\geq0$ when $p=0$.

\smallskip

Finally, we consider $\frs = \frg_{6,54}^{2+2l,l}$, whose structure equations with respect to a basis $(e^1,\ldots,e^6)$ of $\frs^*$ are the following
\[
\left(-e^{16}-e^{35}, -l\,e^{26}-e^{45}, \left(1+2l\right)e^{36},\left(2+l\right)e^{46},-\left(2+2l\right) e^{56},0 \right), \quad l\in\R.
\]
As in the previous case, the nilradical of $\frs$ is $\frn = \langle e_1,e_2,e_3,e_4,e_5\rangle  \cong \frg_{5,1}$. 
Let $D\in\Der(\frs)$ be a generic derivation and consider the extension $\frg =  \frs\rtimes_D\R$. Requiring $\frg$ to be strongly unimodular gives
\[
D =\left(
\begin{array}{cccccc}
a_{1} & 0 & a_{2} & 0 & a_{3} & a_{4} 
\\
 0 & -a_{1} & 0 & a_{2} & a_{6} & a_{7} 
\\
 0 & 0 & a_{1} & 0 & 0 & a_{3} \left(-1-2 l \right) 
\\
 0 & 0 & 0 & -a_{1} & 0 & a_{6} \left(-2-l \right) 
\\
 0 & 0 & 0 & 0 & 0 & a_{2} \left(-2-2 l \right) 
\\
 0 & 0 & 0 & 0 & 0 & 0 
\end{array}\right),
\]
and $0 = \tr(\ad_{e_6}|_{\frn^{\sst1}}) = l+1$. 
We now compute the generic 2-form $\omega = \hat{d}\beta-D^*\alpha$ and the generic 3-form $\psip=\hat{d}\alpha$, and we consider all possible values of the parameters $a_r$, $\alpha_{ij}$, $\beta_k$ 
for which $\omega^3\neq0$ and 
\[
\lambda(\psip) = 16 \alpha_{12}^{2} \alpha_{24} \alpha_{13} <0. 
\]
We then determine the almost complex structure $J$ induced by $(\omega,\psip)$, and we conclude observing that  $\omega(e_i,Je_i)=0$, for $i=1,2,3,4$. 
Notice that, when $l+1\neq0$, the Lie algebra $\frg$ is not strongly unimodular and $\omega(e_i,Je_i)$ is proportional to $l+1$, for $i=1,2,3,4$. 
\end{proof}

We still have to examine the indecomposable Lie algebras listed in \cite[Table A.17]{Boc}, namely those with abelian nilradical $\R^4$.  
In the next result, we rule out all of them but $\frak g_{6,101}^{a,b,-1-a,-1-b}$. This last Lie algebra will be considered in Proposition \ref{6101}.

\begin{proposition}
Let $\frs$ be a Lie algebra that is isomorphic to one of those listed in Table $\mathrm{A}.17$ of \cite{Boc} but $\frak g_{6,101}^{a,b,-1-a,-1-b}$. 
Then, $\frs$  cannot occur as an ideal of a strongly unimodular solvable Lie algebra admitting exact $\G_2$-structures.
\end{proposition}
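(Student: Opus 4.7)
The plan is to adapt the strategy of the preceding propositions to every Lie algebra appearing in Table A.17 of \cite{Boc} other than $\frg_{6,101}^{a,b,-1-a,-1-b}$. A useful preliminary observation is that in Table A.17 the nilradical is the abelian Lie algebra $\frn = \langle e_1,e_2,e_3,e_4\rangle \cong \R^4$, so its descending central series is simply $\frn^{\sst 0}=\frn$ and $\frn^{\sst k} = \{0\}$ for $k\geq 1$. In particular, the strong unimodularity condition for the extension $\frg = \frs\rtimes_D\R$ reduces to requiring that $\ad_X|_\frn$ be traceless for every $X\in\frg$, which translates into a small number of explicit linear equations on the entries of the generic derivation $D\in\Der(\frs)$.

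For each individual Lie algebra $\frs$ in the list, I would first write down the generic derivation $D$, impose the strong unimodularity constraints, and then compute the pair $(\omega,\psip)$ of \eqref{SU3ideal} from a generic 2-form $\alpha = \sum_{1\leq i<j\leq 6}\alpha_{ij}e^{ij}$ and a generic 1-form $\beta = \sum_{k=1}^6 \beta_k e^k$ on $\frs$. The next step is to evaluate $\lambda(\psip)$ using the volume form $e^{123456}$. In many cases I expect $\lambda(\psip)\geq 0$ identically in the parameters $\alpha_{ij}$, and then $\psip$ fails to be stable in the sense required for an $\SU(3)$-structure, so $\frs$ can be discarded immediately. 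In the remaining cases, $\lambda(\psip)<0$ on an open set and one must form the almost complex structure $J$ via \eqref{Jpsi} and test whether $g = \omega(\cdot,J\cdot)$ can be positive definite. Following the pattern of the decomposable case, I expect to exhibit, for every strongly unimodular choice of $D$ and every admissible $(\omega,\psip)$, a basis vector $e_i$ for which $\omega(e_i,Je_i)=0$, thereby contradicting positive definiteness of $g$.

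The main obstacle is the size of the case analysis: Table A.17 contains many Lie algebras, several of them depending on real parameters whose critical values must be isolated. For a given $\frs$, the choice of the ``bad'' basis vector $e_i$ is not uniform a priori; it depends on the interplay between the entries of $D$ constrained by strong unimodularity and the structure constants of $\frs$, which together govern which components of $\omega$ and $\psip$ can be non-vanishing. When the direct vanishing argument does not close, a natural remedy---already exploited in the proof of Proposition \ref{3434}---is to conjugate the pair $(\omega,\psip)$ by a carefully chosen automorphism $F = \exp(S)$ coming from a nilpotent derivation $S\in\Der(\frs)$, so as to eliminate enough components of $F^*\psip$ for the vanishing $F^*\omega(e_i, J_{F^*\psip}e_i) = 0$ to become manifest. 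The Lie algebra $\frg_{6,101}^{a,b,-1-a,-1-b}$ is set aside here because its four-parameter family resists this routine and demands the more delicate compatibility analysis carried out separately in Proposition \ref{6101}.
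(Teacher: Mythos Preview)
Your plan matches the paper's approach in spirit: compute the admissible derivations $D$ under strong unimodularity, form $(\omega,\psip)$, and then obstruct the $\SU(3)$-structure either by showing $\lambda(\psip)\geq 0$ or by finding a vector on which $g=\omega(\cdot,J\cdot)$ vanishes. Two small calibrations are worth noting. First, the non-negativity of $\lambda(\hat d\alpha)$ occurs in only one case here, $\frg_{6,114}^{a,-1,-a/2}$, not ``many''; the remaining algebras all admit exact $3$-forms with $\lambda<0$. Second, for $\frg_{6,115}^{-1,b,c,-c}$ and $\frg_{6,118}^{0,b,-1}$ the single-vector vanishing $\omega(e_i,Je_i)=0$ does not go through directly; instead the paper checks that $\omega(e_3,Je_3)\,\omega(e_4,Je_4)\leq 0$, so the two diagonal entries cannot be simultaneously positive. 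This sign-product argument is the one refinement your outline is missing. Conversely, the automorphism trick $F=\exp(S)$ from Proposition~\ref{3434} that you propose as a fallback is never needed for the algebras covered by this proposition---the obstruction is always read off directly from the raw pair $(\omega,\psip)$.
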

\begin{proof}
Assume first that $\frs$ is not isomorphic to one of $\frg_{6,114}^{a,-1,-a/2}$, $\frg_{6,115}^{-1,b,c,-c}$, $\frg_{6,118}^{0,b,-1}$. 
Let $(e_1,\ldots,e_6)$ be a basis of $\frs$ for which the structure equations are those given in Table $\mathrm{A}.17$ of \cite{Boc},  
and consider the generic derivation $D\in\Der(\frs)$ such that $\frg = \frs\rtimes_D\R$ is strongly unimodular.
Then, arguing as in the proof of Proposition \ref{PropTabA11}, 
we obtain that for every non-degenerate 2-form $\omega = \hat{d}\beta + D^*\alpha$ and every negative stable 3-form $\psip = \hat{d}\alpha$ there exists (at least) one basis vector $e_i$ 
in the nilradical $\R^4$ of $\frs$ such that 
$\omega(e_i,Je_i)=0$. As before, this depends on the expressions of $D$ and $J$ in each case under exam. 

For the remaining Lie algebras of Table $\mathrm{A}.17$, we obtain different types of contradictions. 
If $\frs$ is one of $\frg_{6,115}^{-1,b,c,-c}$, $\frg_{6,118}^{0,b,-1}$, we can proceed as above and conclude observing that 
\[
\omega(e_3,Je_3)\,\omega(e_4,Je_4) \leq 0.
\]

Finally, if $\frs = \frg_{6,114}^{a,-1,-a/2}$, we consider the basis $(e^1,\ldots,e^6)$ of $\frs^*$ for which the structure equations are 
\[
 \frg_{6,114}^{a,-1,-a/2} = \left(a\,e^{15}-e^{16}, e^{26}, -\frac{a}{2}\,e^{35}-e^{45}, e^{35}+\frac{a}{2}\,e^{45},0,0 \right),\quad a\neq0.
\]
Then, we choose the volume form $e^{123456}$, and we compute the quartic polynomial $\lambda(\hat{d}\alpha)$, for a generic $\alpha=\sum_{1\leq i < j \leq6}\alpha_{ij}e^{ij}\in\Lambda^2\frs^*$,  
obtaining 
\[
\lambda(\hat{d}\alpha) = \left(a \left(\alpha_{13} \alpha_{24}+\alpha_{14} \alpha_{23}\right)-2 \alpha_{13} \alpha_{23}-2 \alpha_{14} \alpha_{24}\right)^{2} 
					+4 \left(-\alpha_{13} \alpha_{24}+\alpha_{14} \alpha_{23}\right)^{2} \geq0.
\]
\end{proof}

\begin{proposition}\label{6101}
The indecomposable Lie algebra $\frg_{6,101}^{a,b,-1-a,-1-b}$ cannot occur as an ideal of a strongly unimodular solvable Lie algebra admitting exact $\G_2$-structures.
\end{proposition}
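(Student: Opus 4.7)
The plan is to follow the template established in the earlier propositions of Section \ref{SolSect}. First I would extract the structure equations of $\frs = \frg_{6,101}^{a,b,-1-a,-1-b}$ from Table A.17 of \cite{Boc}. This algebra has abelian nilradical $\frn = \R^4 = \langle e_1, e_2, e_3, e_4 \rangle$, with $e_5$ and $e_6$ acting on $\frn$ by commuting semisimple derivations whose eigenvalues are prescribed by the parameters $a, b, -1-a, -1-b$ (the two trailing parameters being dictated by unimodularity of $\frs$). The descending central series of $\frn$ is trivial beyond $\frn^{\sst 0}$, so the strong unimodularity of the extension $\frg = \frs \rtimes_D \R$ reduces to the single condition $\tr(\ad_{e_7}|_\frn) = 0$, which cuts out a linear subspace inside $\Der(\frs)$.

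Next I would write down the generic $\alpha = \sum_{i<j} \alpha_{ij} e^{ij}$ and $\beta = \sum_k \beta_k e^k$ on $\frs$, form the candidate SU(3)-pair $(\omega, \psi) = (\hat d\beta - D^*\alpha, \hat d\alpha)$ of \eqref{SU3ideal}, and compute $\lambda(\psi)$ with respect to the fixed volume form $e^{123456}$. Since $\frs$ depends on two real parameters $(a,b)$, I expect the expression of $\lambda(\psi)$ to be a polynomial in the $\alpha_{ij}$ whose sign can be arranged to be negative only in a small semialgebraic region. I would then, as in Proposition \ref{3434}, exploit an automorphism $F = \exp(S) \in \Aut(\frs)$ with $S$ built out of nilpotent off-diagonal blocks to normalize $F^*\psi$ into a canonical form with the fewest nonvanishing components, which makes the almost complex structure $J_{F^*\psi}$ given by \eqref{Jpsi} explicit.

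Having reduced to a normalized $\psi$, I would impose the compatibility condition $\omega \wedge \psi = 0$ and use it to solve for the variables $\beta_k$ and for a few of the $a_r$'s (entries of $D$), exactly as in the treatment of $\frg_{5,33}^{-1,-1} \oplus \R$. The remaining obstruction is to show that the symmetric bilinear form $g = \omega(\cdot, J\cdot)$ cannot be positive definite for any admissible choice of $(a, b)$, derivation $D$, and parameters $\alpha_{ij}, \beta_k$. I would extract this by examining the diagonal and selected off-diagonal entries $g_{ii}$, $g_{ij}$: the quartic polynomial inequality $\lambda(\psi) < 0$ together with the linear compatibility constraints should force some $g_{ii}$ to vanish or some principal $2 \times 2$ minor to be nonpositive, producing the desired contradiction.

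The main obstacle is the case analysis generated by the two-parameter family $(a, b)$: certain exceptional values (for instance those making pairs of eigenvalues of $\ad_{e_5}, \ad_{e_6}$ coincide, or coincide with $0$ or $\pm 1$) may enlarge $\Der(\frs)$ and hence the space of admissible $D$, requiring a separate argument. I would handle these special loci by redoing the normalization with the enlarged automorphism group $\Aut(\frs)$ available there, and then apply the same positivity/compatibility obstruction. The overall computations are routine but lengthy, and the key conceptual point, shared with the preceding propositions, is that the combination of exactness of $\psi$, the structure of $D^*\alpha$, and the rigidity of the $\SU(3)$ compatibility conditions leaves no room for a positive definite $g$.
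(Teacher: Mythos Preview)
Your plan is broadly correct and is very much in the spirit of the paper's argument, but the emphasis differs in a way worth noting. You propose to normalize $\psi$ via an automorphism, impose $\omega\wedge\psi=0$, and then rule out positive definiteness of $g$---the template of the $\frg_{5,33}^{-1,-1}\oplus\R$ proof. The paper instead leads with the \emph{null-vector} strategy of Proposition~\ref{3434}: it chooses the nilpotent $S$ not to put $\psi$ in canonical form for its own sake, but to arrange simultaneously $F^*\omega(e_5,e_6)=0$ and $J_{F^*\psi}e_5\in\langle e_5,e_6\rangle$, which immediately gives a vector $x=Fe_5$ with $\omega(x,Jx)=0$. This is faster when it works, because it bypasses the compatibility system and the full definiteness analysis.

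The case analysis is also sliced differently from what you anticipate. The paper splits first on $a=-1$ versus $a\neq -1$ (indeed because $\Der(\frs)$ changes), but the more delicate split in the $a\neq -1$ branch is not on further parameter coincidences enlarging $\Der(\frs)$: it is on whether a certain quartic $t_2$ in the $\alpha_{ij}$ and $(a,b)$ vanishes. When $t_2\neq 0$ the linear system for the $s_i$ has a unique solution and the null-vector argument goes through; when $t_2=0$ the normalization fails and the paper falls back to exactly your proposed method---computing $g'_{11},g'_{33},g'_{44}$ and showing by a sign analysis of $p_1=\alpha_{12}\alpha_{34}$, $p_2=\alpha_{14}\alpha_{23}$, $p_3=-\alpha_{13}\alpha_{24}$ that definiteness is impossible. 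So your approach is what the paper uses in the degenerate case, while the paper's primary tool is the more direct null-vector obstruction.
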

\begin{proof}
Let  $\frs = \frg_{6,101}^{a,b,-1-a,-1-b}$, and consider the basis $\mathcal{B}^* = (e^1.\ldots,e^6)$ of $\frs^*$ for which the structure equations are 
\[
 (a e^{15} + b e^{16}, - (a + 1) e^{25} - (b +1) e^{26}, e^{36}, e^{45}, 0,0),
\]
with $ab \neq 0$ and $(-a  -1)^2 + (-b -1)^2 \neq 0$. Let $\mathcal{B}=(e_1,\ldots,e_6)$ be the basis of $\frs$ with dual basis $\mathcal{B}^*$. 
We will study the cases $a = -1$ and $a \neq - 1$ separately. 

\smallskip

Assume that $a = -1$. Then, the nilradical of  $\frs$ is the abelian ideal $\frn =\langle e_1,e_2,e_3, e_4\rangle$, 
and the generic derivation $D\in\Der(\frs)$ for which $\frg = \frs \rtimes_D\R$ is strongly unimodular has the following matrix representation with respect to $\mathcal{B}$
\[
D = \left(
\begin{array}{cccccc}
a_{1} & 0 & 0 &0& a_2 & -a_2 \,  b 
\\
 0 & a_{3} & 0 & 0 & 0 &a_4 
\\
 0 & 0 & a_{5} & 0 & 0& a_6
\\
 0 & 0 & 0 & -a_{5}-a_{3}-a_{1}  & a_8 & 0 
\\
 0 & 0 & 0 & 0 &0&0
\\
 0 & 0 & 0 & 0 & 0&  0
\end{array}\right),
\]
where $a_i\in\R$.

Let $\alpha = \sum_{1\leq i<j\leq6} \alpha_{ij}e^{ij}$ and $\beta = \sum_{i=1}^6\beta_k e^k$ be a generic $2$-form  and  a generic $1$-form on  $\frs$, respectively.  
Then, the forms $(\omega,\psip)$ given by \eqref{SU3ideal} have the following expressions
$$
\begin{array}{lcl} 
\omega &=& - \alpha_{12}  (a_1 + a_3) e^{12} - \alpha_{13}  (a_1 + a_5) e^{13} + \alpha_{14}  (a_3 + a_5) e^{14} + (- a_1 \alpha_{15}  - a_8 \alpha_{14} - \beta_1) e^{15} \\[2pt]
&& + (b \, \beta_1 - a_1 \alpha_{16} - a_4 \alpha_{12} - a_6 \alpha_{13}) e^{16} - \alpha_{23} (a_3 + a_5) e^{23} 
+ \alpha_{24}  (a_1 + a_5) e^{24} \\[2pt] && + (a_2 \alpha_{12} - a_3 \alpha_{25} - a_8 \alpha_{24}) e^{25} 
+ (- b \, a_2 \alpha_{12}  - b  \, \beta_2 - a_3 \alpha_{26} - a_6 \alpha_{23}  - \beta_2) e^{26}  \\[2pt]
 && +\alpha_{34} (a_1 + a_3) e^{34} + (a_2 \alpha_{13}- a_5 \alpha_{35}  - a_8 \alpha_{34}) e^{35}   \\[2pt]
&&  + (- b \, a_2 \alpha_{13} + a_4 \alpha_{23} - a_5  \alpha_{36} + \beta_3) e^{36}+ (a_1 \alpha_{45} + a_2 \alpha_{14} + a_3 \alpha_{45} + a_5 \alpha_{45}+ \beta_4) e^{45}   \\[2pt]
&& + (-b a_2 \alpha_{15} - a_2 \alpha_{16} + a_4 \alpha_{25} + a_6 \alpha_{35}  - a_8 \alpha_{46}) e^{56}  \\[2pt]
&& + (- b \, a_2 \alpha_{14}  + a_1 \alpha_{46}  + a_3 \alpha_{46}  + a_4 \alpha_{24} + a_5 \alpha_{46}  + a_6 \alpha_{34}) e^{46}  
\end{array} 
$$
 and
\[
\begin{split}
\psip =& \alpha_{12}  (e^{125} + e^{126} ) +\alpha_{13}  ( e^{135} -  (1 + b) e^{136})  - \alpha_{14}  b e^{146} + \alpha_{24}  (-e^{245}+ (1 + b) e^{246}) \\
	&+ \alpha_{23}  b e^{236}  -  \alpha_{34}  (e^{345} + e^{346}) + (- b \alpha_{15}  - \alpha_{16}) e^{156} +\alpha_{25}  (1 + b) e^{256}  - \alpha_{35} e^{356} + \alpha_{46}  e^{456}.
\end{split} 
\]
Assume that $\lambda(\psip) <0$, and let $J$ be the almost complex structure induced by $\psip$ and the orientation $e^{123456}$. 
We will show that there exists a nonzero vector $x\in\frs$ such that  $\omega (x, J x) = 0$. 
 To prove this, we consider  the nilpotent  derivation $S\in\Der(\frs)$ with associated matrix 
$$
S= \left( \begin{array}{cccccc} 0&0& 0 & 0&  s_1 & - b \, s_1\\[2pt]
 0 & 0 &  0 & 0 &  0 & s_2\\[2pt]
0 &  0 &  0 & 0 & 0 & s_3\\[2pt]
0 &  0 &  0 &  0 & s_4 &  0\\[2pt]
 0 & 0 &  0 &  0 &  0 &  0\\[2pt]
 0 & 0 &  0 &  0 &  0 &  0 
 \end{array}
 \right ),
$$
and  the automorphism  $F = \exp (S)$.   
With similar computations as in the proof of Proposition \ref{3434}, we see that there exist certain $s_i$, $1 \leq i \leq 4$, 
 such that $F^* \omega (e_5, e_6) =0$ and  $J_{F^*\psip} (e_5) \in \la e_5, e_6 \ra$, where $J_{F^*\psip} = F^{-1}\circ J\circ F$. This follows comparing
\begin{align*}
F^*\psip=& \alpha_{12}  (e^{125} + e^{126}) +  \alpha_{13} (e^{135} - (1+b) e^{136}) -  b \, \alpha_{14}   e^{146} +  b \, \alpha_{23}  e^{236} \\ 
+& \alpha_{24}  (-e^{245} +  (1+b) e^{246}) - \alpha_{34} (e^{345} + e^{346})\\
-& (b \, \alpha_{14}  s_4 +b \alpha_{15} + \alpha_{12}  s_2 + \alpha_{13}  s_3 + \alpha_{16}) e^{156} \\
+& (1+b)(-  \alpha_{12}  s_1 + \alpha_{24}  s_4 + \alpha_{25})e^{256} \\
+& (\alpha_{13}  s_1 - \alpha_{34}  s_4- \alpha_{35})e^{356} + (b \,   \alpha_{14} s_1 - \alpha_{24}  s_2-\alpha_{34}  s_3 + \alpha_{46})e^{456},
\end{align*}
with
\begin{align*} 
F^*\omega(e_5,e_6)=-& a_1 s_1 ( b \alpha_{15}  +\alpha_{12}  s_2 + \alpha_{13}  s_3  + \alpha_{16}) 
+ a_1 s_4 ( \alpha_{46}  -  \alpha_{34}  s_3 - \alpha_{24}  s_2)  \\
+& a_3 s_2 ( - \alpha_{12}  s_1 + \alpha_{25})
+ a_3 s_4  (b \alpha_{14}  s_1 -\alpha_{34}  s_3 + \alpha_{46}) \\
+& a_5 s_3 (- \alpha_{13}  s_1 +  \alpha_{35})
+ a_5 s_4 (b \alpha_{14}  s_1 - \alpha_{24}  s_2 + \alpha_{46}) \\
-& a_2 ( b \alpha_{14}  s_4 + b\alpha_{15}  + \alpha_{12}  s_2 +  \alpha_{13}  s_3 + b_5) 
+ a_4 (-  \alpha_{12}  s_1 + \alpha_{24}  s_4 + \alpha_{25}) \\
+& a_6 (- \alpha_{13}  s_1 + \alpha_{34}  s_4 + \alpha_{35})
+ a_8 ( - b \alpha_{14}  s_1 + \alpha_{24}  s_2 +  \alpha_{34}  s_3 -  \alpha_{46}),
\end{align*}
and observing that $F^*\omega(e_5,e_6)=0$ if the coefficients of $e^{156}$, $e^{256}$, $e^{356}$ and $e^{456}$ in the expression of $F^*\psip$ vanish. 
This last condition gives the linear system
$$
\left\{\begin{array} {l}
 b  \, \alpha_{14}  s_4 + b  \alpha_{15}  +  \alpha_{12}  s_2 +  \alpha_{13}  s_3  = - \alpha_{16}, \\
- \alpha_{12}  s_1 +   \alpha_{24}  s_4= -\alpha_{25}, \\
\alpha_{13}  s_1 - \alpha_{34} s_4 = \alpha_{35}, \\
b  \alpha_{14} s_1 -  \alpha_{24}  s_2 -  \alpha_{34}  s_3= - \alpha_{46},
\end{array}
\right.
$$
 which has a unique solution $(\bar{s}_1,\bar{s}_2, \bar{s}_3,\bar{s}_4)$ under the constraint $\lambda(\psip)<0$.  
 The choice $(\bar{s}_1,\bar{s}_2, \bar{s}_3,\bar{s}_4)$ gives
\begin{align*}
F^*\psip	&=\alpha_{12}  (e^{125} + e^{126}) + \alpha_{13} (e^{135} - (1+b)e^{136}) - \alpha_{14}  b e^{146} +  \alpha_{23}  b e^{236} \\ 
		&\quad+  \alpha_{24} (-e^{245} +  (1+b) e^{246}) -  \alpha_{34}(e^{345} + e^{346}),
\end{align*}
and $F^*\omega(e_5,e_6)=0$.
From the expression of $F^*\psip$, we observe that $J_{F^*\psip} e_5 \in \la e_5, e_6 \ra$. In detail, 
\[
\frac{1}{2}\,\sqrt{-\lambda(F^*\psip)}\,J_{F^* \psip}e_5= - ((b+1)  \alpha_{13}   \alpha_{24} +  \alpha_{12} \alpha_{34})\, e_5 + (\alpha_{12}   \alpha_{34} - \alpha_{13} \alpha_{24})\, e_6.
\]
We then have that $\omega(x,Jx)=0$ for $x = Fe_5$.

\smallskip

\smallskip

Let us now focus on the case $a \neq - 1$.
The nilradical of  $\frs$ is still the abelian ideal $\frn =\langle e_1,e_2,e_3, e_4\rangle$, 
and the generic derivation $D\in\Der(\frs)$ for which $\frg = \frs \rtimes_D\R$ is strongly unimodular 
must have the following matrix representation with respect to $\mathcal{B}$
\[
D = \left(
\begin{array}{cccccc}
a_{1} & 0 & 0 &0& a_2 & \frac{a_2 \,  b }{a}
\\[2pt]
 0 & a_{3} & 0 & 0 & a_4 & \frac{a_4 (1 + b)}{a + 1}
\\[2pt]
 0 & 0 & a_{5} & 0 & 0& a_6
\\[2pt]
 0 & 0 & 0 & -a_{5}-a_{3}-a_{1}  & a_8 & 0 
\\[2pt]
 0 & 0 & 0 & 0 &0&0
\\[2pt]
 0 & 0 & 0 & 0 & 0&  0
\end{array}\right),
\]
where $a_i\in\R$.

Let $$
\begin{array}{lcl} t_2  &\coloneqq &  -\frac{1}{a(1+a)}(a^2 b^2 \alpha_{13}^2 \alpha_{24}^2-2 a^2 b^2 \alpha_{13}   \alpha_{14}   \alpha_{23}   \alpha_{24}  + a^2 b^2 \alpha_{14}^2 \alpha_{23}^2-2  a^2 b \alpha_{12}  \alpha_{13} \alpha_{24}  \alpha_{34}\\
&&  -2  a^2 b  \alpha_{12}   \alpha_{14}   \alpha_{23}   \alpha_{34} +2  a^2 b  \alpha_{13}^2 \alpha_{24}^2  -2  a^2 b \alpha_{13}   \alpha_{14}   \alpha_{23}   \alpha_{24}  +2  a b^2  \alpha_{12}   \alpha_{13}    \alpha_{24}  \alpha_{34}\\[2pt]
&& +2  a b^2  \alpha_{12}    \alpha_{14}    \alpha_{23}    \alpha_{34} -2  a b^2  \alpha_{13}   \alpha_{14}    \alpha_{23}  \alpha_{24}  +2  a b^2  \alpha_{14}^2  \alpha_{23}^2+ a^2 \alpha_{12}^2 \alpha_{34}^2\\[2pt]
&& -2  a^2 \alpha_{12}  \alpha_{13}  \alpha_{24}   \alpha_{34} + 
 a^2  \alpha_{13}^2 \alpha_{24}^2-2  a b  \alpha_{12}^2 \alpha_{34}^2+2  a b \alpha_{12}   \alpha_{13}  \alpha_{24} \alpha_{34}-2  a b \alpha_{12}  \alpha_{14}   \alpha_{23}   \alpha_{34}\\[2pt]
 &&  -2  a b  \alpha_{13}    \alpha_{14}   \alpha_{23}  \alpha_{24} +b^2  \alpha_{12}^2  \alpha_{34}^2+2 b^2  \alpha_{12}   \alpha_{14}   \alpha_{23}  \alpha_{34}+b^2  \alpha_{14}^2  \alpha_{23}^2). 
\end{array}
$$
We will study  the cases $t_2 \neq 0$ and $t_2 =0$ separately.

If $t_2 \neq 0$, we claim that there exists a nonzero vector $x\in\frs$ such that $\omega(x,Jx)$=0. 
The discussion is similar to the previous case. 
Here, we consider  the nilpotent  derivation $S$ with matrix representation 
$$
S= \left( \begin{array}{cccccc} 0&0& 0 & 0&  s_1 & \frac{b s_1}{a}\\[2pt]
 0 & 0 &  0 & 0 &  s_2 & \frac{s_2 (1+b)}{1+a} \\[2pt]
0 &  0 &  0 & 0 & 0 & s_3\\[2pt]
0 &  0 &  0 &  0 & s_4 &  0\\[2pt]
 0 & 0 &  0 &  0 &  0 &  0\\[2pt]
 0 & 0 &  0 &  0 &  0 &  0 
 \end{array}
 \right )
$$
and the automorphism  $F = \exp (S)$. 
Requiring the coefficients of  $e^{156}$,  $e^{256}$, $e^{356}$, $e^{456}$ in the expression of $F^* \psip$ to be zero gives 
the following  linear system in the variables $s_i,$ $1 \leq i\leq 4$, 
$$
\left \{ 
\begin{array} {l}
 \alpha_{13} as_3- \alpha_{14} bs_4-\frac{1+b}{1+a} \alpha_{12} s_2+ \alpha_{12} s_2= -a \,  \alpha_{16}+b \, \alpha_{15}, \\
- \alpha_{23} s_3a+ \alpha_{24} s_4b+\frac{b}{a} \alpha_{12} s_1- \alpha_{12} s_1- \alpha_{23} s_3+ \alpha_{24} s_4=-(1+b) \alpha_{25} +(1+a) \alpha_{26}, \\
  \frac{1+b}{1+a} a \alpha_{23} s_2- \alpha_{23} bs_2+\frac{1+b}{1+a} \alpha_{23} s_2+ \alpha_{13} s_1 \alpha_{34} s_4=\, \alpha_{35}, \\
\frac{1+b}{1+a}  a  \alpha_{24} s_2 - \alpha_{24} s_2b-\frac{b}{a} \alpha_{14} s_1- \alpha_{24} s_2- \alpha_{34} s_3=- \alpha_{46}.
\end{array} 
\right. 
$$  
This system has a unique solution  $(\bar{s}_1,\bar{s}_2,\bar{s}_3,\bar{s}_4)$ under the constraint  $t_2 \neq 0$.
In such a case, we have  $F^*\omega(e_5,e_6)=0$ and $J_{F^*\psip}e_5\in\langle e_5,e_6\rangle$, whence the claim follows.

\smallskip

If $t_2=0$,  we claim that  $g = \omega(\cdot,J\cdot)$  cannot be definite. 
To prove this, suppose by contradiction that $g$ is positive  (or negative) definite, and consider $g' \coloneqq \sqrt{-\lambda(\psip)}\,g$. Then, 
\begin{align*}
g_{11}' &= 4 \alpha_{12}  \alpha_{13}   \alpha_{14}   ((a_1 + a_3 + a_5) a - b a_5 + a_1), \\
g'_{33} &= 4  \alpha_{13}   \alpha_{23}   \alpha_{34}   ((a_1 + a_3 + a_5) a - b a_5 + a_1),  \\
g'_{44} &= -4  \alpha_{14}  \alpha_{24}   \alpha_{34}  ((a_1 + a_3 + a_5) a - b a_5 + a_1). 
\end{align*}
Therefore, $ \alpha_{12}  \alpha_{13}  \alpha_{14} \alpha_{23}  \alpha_{24}  \alpha_{34} \neq 0$, and 
the polynomials $p_1\coloneqq \alpha_{12}  \alpha_{34}$, $p_2\coloneqq \alpha_{14} \alpha_{23}$ and $p_3\coloneqq - \alpha_{13} \alpha_{24}$ must have the same sign. 

Since $b(a+1) \alpha_{23} \neq 0$, the condition $t_2=0$ can be seen as a second order equation in the variable $\alpha_{14}$. 
We can solve it provided that $a(b+1) \alpha_{13}  \alpha_{24} (a-b)b_1 \alpha_{34}  \geq 0$, obtaining the solutions:
$$
\alpha_{14}^{\pm}= \frac{1}{\alpha_{23} b(a+1)}\left( a(b+1) \alpha_{13} \alpha_{24}  + (a-b) \alpha_{12}  \alpha_{34}  \pm 2 \sqrt{(a(b+1) \alpha_{13}  \alpha_{24} (a-b) \alpha_{12}  \alpha_{34})}\right).
$$
A case by case analysis ensures that the condition $a(b+1) \alpha_{13}  \alpha_{24} (a-b)b_1 \alpha_{34}  \geq 0$ is not compatible with the constraint on $p_1,p_2,p_3$. 
To check this, assume that $a(b+1) \alpha_{13}  \alpha_{24} (a-b) \alpha_{12} \alpha_{34}  \geq 0$ and that $p_1$ and $p_3$ have the same sign.  
Then, we can distinguish the  four cases:
\begin{enumerate}
\item[1)]  $ \alpha_{12} \alpha_{34} >0$, $\alpha_{13}  \alpha_{24} <0$, $a-b\leq 0$, $a(b+1)\geq 0$,
\item[2)]  $\alpha_{12} \alpha_{34} >0$, $\alpha_{13} \alpha_{24} <0$, $a-b \geq 0$, $a(b+1)\leq 0$,
\item[3)]  $\alpha_{12}  \alpha_{34} <0$, $\alpha_{13} \alpha_{24} >0$, $a-b \leq 0$, $a(b+1)\geq 0$,
\item[4)]  $\alpha_{12}\alpha_{34}<0$, $ \alpha_{13}  \alpha_{24} >0$, $a-b \geq 0$, $a(b+1)\leq 0$.
\end{enumerate}
Under the assumptions of case $1)$,
$$
\alpha_{14}^{\pm} \alpha_{23} = -\frac{1}{b(a+1)}\left( \sqrt{-a(b+1) \alpha_{13} \alpha_{24}} \mp \sqrt{-(a-b) \alpha_{12} \alpha_{34}} \right)^2.
$$
If in addition $p_1$ and $p_2$ have the same sign,  then 
$b(a+1)<0$. This condition is incompatible with the inequalities  $a\leq b$ and $a(b+1) \geq 0$, since these conditions imply $a\leq b$ and $-a \leq ab < -b$. 
In the remaining cases, we can proceed in a  similar way.
\end{proof}

\smallskip\noindent
{\bf Acknowledgements.}  
A.F.~and A.R.~were supported by GNSAGA of INdAM and by the project PRIN 2017  ``Real and Complex Manifolds: Topology, Geometry and Holomorphic Dynamics''. 
L.M.M.~acknowledges financial support by a FPU Grant (FPU16/03475) and its research stay program (EST19/00747).  
The authors would like to thank the anonymous referee for the valuable comments and suggestions that helped in improving the presentation of the results.

\appendix

\section{}\label{appendix}
In this appendix, we list  the structure equations of all six-dimensional unimodular decomposable solvable non-nilpotent Lie algebras that exist up to isomorphism. 
 {\small
\begin{table}[H]
\renewcommand\arraystretch{1.2}
\begin{tabular}{|c|c|}  
\hline
$\mathfrak{s}$&\small $(de^1,de^2,de^3,de^4,de^5,de^6)$\normalsize \\ \hline \hline
$\frg_{3,4}^{-1} \oplus \R^3$  &\small$(-e^{13},  e^{23}, 0,0,0,0)$ \normalsize \\ \hline
$\frg_{3,4}^{-1} \oplus \frg_{3,1}$  &\small$(-e^{13},  e^{23}, 0, -e^{56},0,0)$ \normalsize \\ \hline
$\frg_{3,4}^{-1} \oplus \frg_{3,4}^{-1}$  &\small$(-e^{13}, e^{23},0,-e^{46}, e^{56},0)$ \normalsize \\ \hline
$\frg_{3,4}^{-1} \oplus \frg_{3,5}^0$  &\small$(-e^{13}, e^{23}, 0,-e^{56},  e^{46},0)$ \normalsize \\ \hline
$\frg_{3,5}^0 \oplus \R^3$  &\small$(-e^{23},  e^{13}, 0, 0,0,0)$ \normalsize \\ \hline
$\frg_{3,5}^0 \oplus \frg_{3,1}$  &\small$(-e^{23},  e^{13}, 0,-e^{56},0,0)$ \normalsize \\ \hline
$\frg_{3,5}^0 \oplus \frg_{3,5}^0$  &\small$(-e^{23},  e^{13}, 0,-e^{56},  e^{46},0)$ \normalsize \\ \hline
$\frg_{4,2}^{–2} \oplus \R^2$  &\small$(-2 e^{14}, e^{24} + e^{34}, e^{34}, 0,0)$ \normalsize \\ \hline
$\frg_{4,5}^{p, - p -1} \oplus \R^2$  &\small$(-e^{14}, - p e^{24}, (p + 1) e^{34}, 0,0,0)$, $p \in [- \frac{1}{2},0)$ \normalsize \\ \hline
$\frg_{4,6}^{-2p,p} \oplus \R^2$  &\small$(2p e^{14}, - p e^{24} - e^{34}, e^{24} - p e^{34},0,0,0)$, $ p >0$ \normalsize \\ \hline
$\frg_{4,8}^{-1} \oplus \R^2$  &\small$(-e^{23}, -e^{24}, e^{34}, 0,0,0)$ \normalsize \\ \hline
$\frg_{4,9}^0 \oplus \R^2$  &\small$(- e^{23}, - e^{34}, e^{24}, 0,0,0))$ \normalsize \\ \hline
$\frg_{5,7}^{p,q,r} \oplus \R$  &\small$(-e^{15}, - p e^{25}, - q e^{35}, - r e^{45}, 0, 0)$,  $- 1 \leq r \leq q \leq p \leq 1,$ $pqr \neq 0,$ $p + q + r = -1$ \normalsize \\ \hline
$\frg_{5,8}^{-1} \oplus \R$  &\small$(- e^{25}, 0, - e^{35}, e^{45}, 0,0,0)$ \normalsize \\ \hline
$\frg_{5,9}^{p,-2 -p}\oplus \R$  &\small$(- e^{15} - e^{25}, - e^{25}, - p e^{35}, (2 + p) e^{45}, 0,0)$,  $ p \geq -1$ \normalsize \\ \hline
$\frg_{5,11}^{-3}\oplus \R$  &\small$(- e^{15} - e^{25}, - e^{25} - e^{35},  - e^{35}, 3 e^{45}, 0,0)$ \normalsize \\ \hline
$\frg_{5,13}^{-1 - 2q, q, r} \oplus \R$  &\small$(- e^{15}, (1 + 2 q) e^{25}, - q e^{35} -  r e^{45}, r e^{35} - q e^{45}, 0,0)$,  $q \in [-1,0],$ $q \neq - \frac 12,$ $r \neq 0$ \normalsize \\ \hline
$\frg_{5,14}^0 \oplus \R$  &\small$(- e^{25}, 0, - e^{45}, e^{35}, 0,0)$ \normalsize \\ \hline
$\frg_{5,15}^{-1} \oplus \R$  &\small$(- e^{15} - e^{25}, - e^{25}, e^{35} - e^{45}, e^{45}, 0,0)$ \normalsize \\ \hline
$\frg_{5,16}^{-1,q} \oplus \R $  &\small$(- e^{15} - e^{25}, - e^{25}, e^{35} - q e^{45}, qe^{35} + e^{45}, 0,0)$,  $q \neq 0$ \normalsize \\ \hline
$\frg_{5,17}^{p,-p,r} \oplus \R$  &\small$(- p e^{15} - e^{25}, e^{15} - p e^{25}, p e^{35} - r e^{45}, r e^{35} + p e^{45}, 0,0)$,  $r \neq 0$ \normalsize \\ \hline
$\frg_{5,18}^0 \oplus \R$  &\small$(- e^{25} - e^{35}, e^{15} - e^{45}, - e^{45}, e^{35}, 0,0)$ \normalsize \\ \hline
$\frg_{5,19}^{p,-2p - 2} \oplus \R $  &\small$(- e^{23} - (1 + p) e^{15}, - e^{25}, - p e^{35}, (2 p + 2) e^{45}, 0,0)$, $p \neq - 1$ \normalsize \\ \hline
$\frg_{5,20}^{-1}\oplus \R$  &\small$(- e^{23} - e^{45}, - e^{25}, e^{35}, 0,0,0)$   \normalsize \\ \hline
$\frg_{5,23}^{-4}\oplus \R$  &\small$(- e^{23} - 2 e^{15}, - e^{25}, - e^{25} - e^{35}, 4 e^{45}, 0,0)$   \normalsize \\ \hline
$\frg_{5,25}^{4,4p}\oplus \R$  &\small$(- e^{23} - 2 p e^{15}, - p e^{25} + e^{35}, - e^{25} - p e^{35}, 4 p e^{45}, 0,0)$, $p \neq 0$  \normalsize \\ \hline
$\frg_{5,26}^{0,\varepsilon} \oplus \R$  &\small$(- e^{23} - \varepsilon e^{45}, e^{35}, - e^{25}, 0,0,0),$    $\varepsilon = \pm 1$ \normalsize \\ \hline
$\frg_{5,28}^{- 3 / 2} \oplus \R$  &\small$(- e^{23} + \frac 12 e^{15}, \frac 32 e^{25}, - e^{35}, - e^{35} - e^{45},  0,0)$ \  \normalsize \\ \hline
$\frg_{5,30}^{-  4 / 3} \oplus \R$  &\small$( - e^{24} - \frac 23 e^{15}, - e^{34} + \frac 13 e^{25}, \frac 43 e^{35}, - e^{45}, 0,0)$ \  \normalsize \\ \hline
$\frg_{5,33}^{-1,-1}\oplus \R$  &\small$(- e^{14}, - e^{25}, e^{34} + e^{35}, 0,0,0)$ \  \normalsize \\ \hline
$\frg_{5,35}^{-2,0}\oplus \R$  &\small$(2 e^{14}, - e^{24} - e^{35}, e^{25}- e^{34}, 0,0,0)$ \  \normalsize \\ \hline
\end{tabular}  
\caption{Isomorphism classes of six-dimensional unimodular decomposable solvable non-nilpotent Lie algebras.}
\label{tabledec}
\renewcommand\arraystretch{1}
\end{table} 
}

\begin{remark}
As for the Lie algebras appearing in Table \ref{tabledec} and depending on some parameters, 
with the exception of $\frg_{5,9}^{p,-2 -p}$, all of the corresponding simply connected Lie groups admit a lattice for certain values of the parameters, see \cite{Boc}. 
\end{remark}

\end{document}